\pgfplotsset{compat=1.18}
\def\norm#1{\|#1\|}
\DeclareMathOperator{\fl}{\operatorname{f\kern.2ptl}} 
\theoremstyle{thmstyletwo}%
\newtheorem{theorem}{Theorem}
\newtheorem{remark}{Remark}%
\newtheorem{lemma}{Lemma}
\numberwithin{equation}{section}
\begin{document}

\DOI{DOI HERE}
\copyrightyear{2021}
\vol{00}
\pubyear{2021}
\access{Advance Access Publication Date: Day Month Year}
\appnotes{Paper}
\copyrightstatement{Published by Oxford University Press on behalf of the Institute of Mathematics and its Applications. All rights reserved.}
\firstpage{1}


\title[Mixed Precision General ADI Method]{Mixed Precision General
Alternating-Direction Implicit Method for Solving Large Sparse Linear Systems}

\author{Jifeng Ge
\address{School of Mathematics and Computational Science, Xiangtan University, Xiangtan, Hunan, China}}
\author{Bastien Vieublé
\address{Academy of Mathematics and Systems Science, CAS, 
100190 Beijing, China}}
\author{Juan Zhang*
\address{Key Laboratory of Intelligent Computing and 
Information Processing of Ministry of Education, Hunan Key Laboratory for
Computation and Simulation in Science and Engineering, Hunan, China}}

\authormark{Jifeng Ge, Bastien Vieublé and Juan Zhang}

\corresp[*]{Corresponding author: \href{zhangjuan@xtu.edu.cn}{zhangjuan@xtu.edu.cn}}



\abstract{
In this article, we introduce a three-precision formulation of the General 
Alternating-Direction Implicit method (GADI) designed to 
accelerate the solution of large-scale sparse linear systems $Ax=b$. GADI is a
framework that can represent many existing Alternating-Direction 
Implicit (ADI) methods. These methods are a class of linear solvers based on a 
splitting of $A$ such that the solution of the original linear system can be 
decomposed into the 
successive computation of easy-to-solve structured subsystems. Our 
proposed mixed precision scheme for GADI solves these subsystems in low 
precision to reduce the overall execution time while computing the residual and
solution update in high precision to enable the solution to converge to high 
accuracy. We develop a rounding error analysis of mixed precision GADI that 
establishes the rates of convergence of the forward and backward
errors to certain limiting accuracies. 
Our analysis also highlights the conditions on the
splitting matrices under which mixed precision GADI is guaranteed to converge 
for a given set of precisions.
We then discuss a systematic and robust strategy for selecting the
GADI regularization parameter $\alpha$, whose adjustment is critical for
performance.
Specifically, our proposed strategy makes use of a Gaussian Process Regression (GPR) model
trained on a dataset of low-dimensional problems to initialize $\alpha$. 
Finally, we proceed to a performance analysis of mixed precision GADI on an NVIDIA A100
GPU to validate our approach. Using low precision (Bfloat16 or FP32) to solve the subsystems, we obtain speedups of $2.6\times$, $1.7\times$, 
and $3.1\times$ over a full double precision GADI
implementation on large-scale 2D, 3D convection-diffusion and complex 
reaction-diffusion problems (up to $1.3\times 10^{8}$ unknowns), respectively.
}
\keywords{mixed precision,
  alternating-direction implicit method,
  large sparse linear systems,
  rounding error analysis, GPU computation.}

\maketitle

~


\section{Introduction}
\label{sec:intro}

Solving large-scale nonsingular sparse linear systems of the form 
\begin{equation}
    Ax = b, \quad \text{where} \quad  A \in \mathbb{R}^{n \times n} \quad
    \text{and} \quad 0\ne b \in \mathbb{R}^{n},
    \label{eq:linear_equation}
\end{equation}
is a cornerstone 
of computational science and engineering. As problem sizes grow, the demand for
faster and more efficient solvers has become critical. The Alternating-Direction
Implicit (ADI) method, pioneered by Douglas and Rachford~\cite{douglas1956numerical} and
Douglas~\cite{douglas1955numerical}, is a popular method for solving the large matrix
equations that arise in systems theory and 
control~\cite{doi:10.1137/130912839}, and can be formulated to construct
solutions in a memory-efficient way~\cite{doi:10.1137/S0895479801384937,BENNER20091035}. 
To solve $Ax=b$ efficiently, ADI methods look for splitting the matrix \( A \) into 
\( M + N \), where \( M \) and \( N \) are chosen to decompose the original 
linear system into the computation of successive, simpler, subsystems.
By using a general splitting formulation of matrices and an extra 
splitting parameter, the General Alternating-Direction Implicit (GADI) 
framework, introduced by Jiang et al.~\cite{doi:10.1137/21M1450197}, can be
used to express many existing and new ADI
methods~\cite{2007On, bai2003hermitian, WANG20132352}.
The GADI iterations make use of two parameters that adjust the numerical stability
and rate of convergence: a regularization parameter 
\( \alpha > 0 \) and an extrapolation parameter \( \omega \in [0, 2) \). A key strength
of GADI is its guaranteed convergence under broad conditions, making it a 
reliable foundation for developing new ADI methods.

A  recent parallel advancement in high-performance computing is the rise of 
mixed precision algorithms driven by the increasing support of 
resource-efficient low precisions in hardware. We list the floating-point 
arithmetics we use or mention in this article in Table~\ref{tb1}. This creates
a compelling opportunity: by strategically combining different precisions 
within an algorithm, one can achieve significant performance gains while
maintaining high accuracy on the final solution. Such mixed precision 
algorithms have been widely employed for the solution of linear systems: for
direct solvers~\cite{doi:10.1137/17M1140819,abhl23}, Krylov-based iterative
solvers~\cite{lld22}, or multigrid solvers~\cite{mbt21}. We 
refer the reader to the surveys by Higham and Mary~\cite{hima22} or Abdelfattah
et al.~\cite{aabc21} for more insight into the employment of mixed precision 
within numerical linear algebra. Yet, except for the recent work~\cite{scsa25},
the literature has not extensively investigated how ADI methods can benefit from mixed 
precision. 

\begin{table}[htbp]
    \centering
    \caption{Parameters for floating-point arithmetics: number of bits for
    the significand (including the implicit leading bit), number of bits for 
    the exponent, unit roundoff, and range.}
	\begin{tabular}{llllll}
	    \toprule
	    Arithmetic & Significand & Exponent & Unit roundoff  & Range                \\
	    \midrule
	    Bfloat16   & 8           & 8        & $3.91\times 10^{-3}$ & $10^{\pm 38}$ \\
	    FP16       & 11           & 5        & $4.88 \times 10^{-4}$ & $10^{\pm 5}$ \\
	    FP32       & 24          & 8        & $5.96\times 10^{-8}$ & $10^{\pm 38}$  \\
	    FP64       & 53          & 11       & $1.11\times 10^{-16}$ & $10^{\pm 308}$ \\
	    FP128      & 113          & 15       & $9.63\times 10^{-35}$ & $10^{\pm 4932}$ \\
	    \bottomrule
	\end{tabular}
    \label{tb1}
\end{table}

In this context, our aim is to introduce a new mixed precision scheme for the GADI framework, which is applicable and efficient across a wide range of ADI methods, and that may lead to substantial resource savings for the solution of different matrix equations and PDEs. We summarize our contributions as follows:
\begin{itemize}
    \item \textbf{The GADI framework in three precisions.} In 
        section~\ref{sec:mix-gadi}, we propose a mixed precision scheme for the GADI framework
        that integrates three different precision parameters. The computation 
        of the solutions to the subsystems, which is generally the most 
        resource-intensive part of the GADI iterations, is performed in low 
        precision. The residual and solution update, which are less 
        computationally intensive, are performed in working precision, which is
        the precision at which the solution is delivered to the user. The 
        residual can alternatively be computed in an even more accurate extra
        precision to further improve the solution accuracy, leading to up to
        three different precisions within GADI.
    \item \textbf{Rounding error analysis.} In section~\ref{sec:ea}, we proceed
        to the rounding error analysis of mixed precision GADI, from which we
        derive conditions for the convergence of the computed solution iterates and 
        limiting accuracies of the
        forward and backward errors. In 
        addition to revealing the roles of the three different precisions on 
        the numerical behavior of GADI, our analysis improves on previous ADI
        rounding error analyses by, first, achieving sharper limiting accuracy 
        bounds and, second, providing expressions of the convergence rates 
        under rounding errors.
    \item \textbf{Systematic regularization parameter selection.} The
        convergence and efficiency of the GADI framework are highly sensitive 
        to the choice of the parameter \( \alpha \). This dependency becomes 
        even more critical in a mixed precision setting, where the limited 
        range and accuracy of formats like FP16 can amplify numerical errors 
        and lead to instability if parameters are not chosen carefully. To 
        balance computational efficiency and numerical stability, we propose a
        systematic strategy to select $\alpha$ in section~\ref{sec:pr}. This 
        strategy uses a Gaussian Process Regression
        (GPR)~\cite{NIPS1995_7cce53cf}, which is a
        machine learning technique that we use to build a predictive optimal
        $\alpha$-parameter model based on data from smaller, cheaper-to-run 
        linear systems. 
    \item \textbf{Performance analysis.} Finally, in section~\ref{sec:ne}, we 
        apply mixed precision GADI to several large-scale problems, 
        including 2D and 3D convection-diffusion equations and a complex 
        reaction-diffusion equation. We benchmark mixed precision GADI on 
        systems up to $1.3\times 10^{8}$ unknowns against
        full double precision GADI, NVIDIA’s cuDSS solver, and
        mixed precision GMRES-based iterative refinement.
\end{itemize}

\section{Mixed precision GADI}
\label{sec:mix-gadi}

\subsection{Background on the GADI framework}
\label{subsec:gadi_framework}
The GADI framework~\cite{doi:10.1137/21M1450197} is an iterative method
for solving the linear system~\eqref{eq:linear_equation}. It is based on a 
splitting of the matrix \(A\) into \(A = M + N\), where \(M\) and \(N\) are 
chosen to make subsequent linear systems easier to solve. Given a 
regularization parameter \(\alpha > 0\) and an extrapolation parameter \(\omega
\in [0, 2)\), the GADI iteration proceeds in two steps
\begin{enumerate}
    \item Solve \( (\alpha I + M) x^{(k + \frac{1}{2})} = (\alpha I - N) x^{(k)} + b \).
    \item Solve \( (\alpha I + N) x^{(k + 1)} = (N - (1 - \omega) \alpha I) x^{(k)} + (2 - \omega) \alpha x^{(k + \frac{1}{2})} \). 
    \label{gadi_step}
\end{enumerate}
This can be expressed as \( x^{(k + 1)} = T_F(\alpha, \omega) x^{(k)} + G(\alpha,
\omega) \), where $T_F(\alpha, \omega) = (\alpha I + N)^{-1} (\alpha I + M)^{-1}
(\alpha^2 I + MN - (1 - \omega) \alpha A)$ and $G(\alpha, \omega) = 
(2 - \omega)\alpha(\alpha I + N)^{-1}(\alpha I + M)^{-1}b$. Hence, the
asymptotic convergence of 
the GADI iteration is determined by the spectral radius of the iteration matrix
$T_F(\alpha, \omega)$, denoted as $\rho(T_F(\alpha, \omega))$, which is the maximum
of the absolute values of its eigenvalues. 
We also denote $T_B(\alpha, \omega) = A T_F(\alpha, \omega)A^{-1}$, the similarity
transform of the iteration matrix which satisfies $\rho(T_B(\alpha, \omega)) =
\rho(T_F(\alpha, \omega))$.
Convergence is guaranteed under 
broad conditions, as stated in the following lemma.
\begin{lemma}[\cite{doi:10.1137/21M1450197} Theorem~2.2]
    The GADI framework converges to the unique solution \( x \) of
    \( Ax = b \) for any \( \alpha > 0 \) and \( \omega \in [0, 2) \), provided
    \( M \) is positive definite and \( N \) is either positive (semi-)definite
    or skew-Hermitian. Moreover, the spectral radius of the iteration matrix 
    satisfies
    \begin{align*}
    \rho(T_F(\alpha, \omega)) = \rho(T_B(\alpha, \omega)) < 1.
    \end{align*}\label{lem:gadi_convergence}
\end{lemma}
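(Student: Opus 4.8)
\emph{Proof proposal.} The plan is to reduce the statement to the spectral bound $\rho(T_F(\alpha,\omega)) < 1$, and then to prove this bound by realising a matrix \emph{similar} to $T_F(\alpha,\omega)$ as a genuine convex combination of the identity and a product of two contractive Cayley-type transforms. First I would dispatch the preliminaries. Write $H(X):=(X+X^*)/2$ for the Hermitian part. Since $\alpha>0$, positive definiteness of $H(M)$ and positive semidefiniteness of $H(N)$ (the skew-Hermitian case being $H(N)=0$) make $\alpha I+M$ and $\alpha I+N$ nonsingular, and the energy estimate $\operatorname{Re}(x^*Ax)\ge x^*H(M)x>0$ for $x\ne 0$ shows $A=M+N$ is nonsingular, so $Ax=b$ has a unique solution $x$. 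Substituting $x^{(k)}=x^{(k+\frac12)}=x^{(k+1)}=x$ into the two GADI steps and using $b=Ax$ shows that $x$ is a fixed point of the affine map $z\mapsto T_F(\alpha,\omega)z+G(\alpha,\omega)$; a stationary iteration converges to its fixed point from every starting vector if and only if its iteration matrix has spectral radius below one, and together with $\rho(T_B(\alpha,\omega))=\rho(T_F(\alpha,\omega))$ this reduces the lemma to showing $\rho(T_F(\alpha,\omega))<1$.

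Next I would rewrite $T_F$. From $A=M+N$ one obtains the two identities $(\alpha I+M)(\alpha I+N)=\alpha^2 I+MN+\alpha A$ and $(\alpha I+M)(\alpha I+N)-2\alpha A=(\alpha I-M)(\alpha I-N)$. The first shows $T_F(\alpha,\omega)=[(\alpha I+M)(\alpha I+N)]^{-1}\bigl[(\alpha I+M)(\alpha I+N)-(2-\omega)\alpha A\bigr]$, so conjugating by $\alpha I+N$ gives the similar matrix
\[
\widehat T \;:=\; (\alpha I+N)\,T_F(\alpha,\omega)\,(\alpha I+N)^{-1} \;=\; I-(2-\omega)\alpha\,(\alpha I+M)^{-1}A(\alpha I+N)^{-1}.
\]
The second identity rewrites $I-2\alpha(\alpha I+M)^{-1}A(\alpha I+N)^{-1}$ as the product $C_M C_N$, where $C_M:=(\alpha I+M)^{-1}(\alpha I-M)$ and $C_N:=(\alpha I-N)(\alpha I+N)^{-1}$; using $2-\omega=2(1-\omega/2)$ this yields
\[
\widehat T \;=\; \tfrac{\omega}{2}\,I+\bigl(1-\tfrac{\omega}{2}\bigr)C_M C_N,
\]
a convex combination because $\omega\in[0,2)$, with the weight $1-\omega/2$ strictly positive.

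The remaining ingredient is contractivity of the Cayley factors. For any vector $v$ and any matrix $K$ with $\alpha I+K$ invertible, the expansion $\|(\alpha I+K)v\|_2^2-\|(\alpha I-K)v\|_2^2=4\alpha\,v^*H(K)v$ shows $\|(\alpha I-K)(\alpha I+K)^{-1}\|_2\le 1$ when $H(K)$ is positive semidefinite, with strict inequality when $H(K)$ is positive definite. Applying this with $K=N$ gives $\|C_N\|_2\le 1$ in both admissible cases for $N$, and applying it with $K=M^*$ — noting $C_M^*=(\alpha I-M^*)(\alpha I+M^*)^{-1}$ and $H(M^*)=H(M)$ positive definite — gives $\|C_M\|_2=\|C_M^*\|_2<1$. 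Hence
\[
\rho(T_F)=\rho(\widehat T)\le\|\widehat T\|_2\le\tfrac{\omega}{2}+\bigl(1-\tfrac{\omega}{2}\bigr)\|C_M\|_2\,\|C_N\|_2<\tfrac{\omega}{2}+\bigl(1-\tfrac{\omega}{2}\bigr)=1,
\]
and $\rho(T_B)=\rho(T_F)<1$, which completes the argument.

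I expect the main obstacle to be one of discovery rather than difficulty: spotting the two algebraic identities that recast $\widehat T$ as the convex combination $\tfrac{\omega}{2}I+(1-\tfrac{\omega}{2})C_M C_N$ is the crux, after which the submultiplicative norm estimate is routine. The point demanding genuine care is the non-Hermitian setting — ``positive definite'' must be read through the Hermitian part, the matrices $\alpha I-M$ and $\alpha I-N$ may themselves be singular (so one must never invert them, which is why $C_M$ is written with the inverse on the left and the adjoint trick is used), and the final strict inequality survives only because $\|C_M\|_2$ is \emph{strictly} below one while $1-\omega/2>0$; the factor $C_N$ alone is merely nonexpansive, indeed unitary when $N$ is skew-Hermitian.
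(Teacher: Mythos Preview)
Your argument is correct. The paper itself does not supply a proof of this lemma; it is quoted verbatim as Theorem~2.2 of the cited GADI reference and used as a black box. So there is no in-paper proof to compare against.

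That said, your route is essentially the one taken in the original GADI/HSS literature: conjugate $T_F$ to the form $\tfrac{\omega}{2}I+(1-\tfrac{\omega}{2})C_MC_N$ and bound the Cayley factors in the $2$-norm. The two algebraic identities you single out, $(\alpha I+M)(\alpha I+N)=\alpha^2I+MN+\alpha A$ and $(\alpha I+M)(\alpha I+N)-2\alpha A=(\alpha I-M)(\alpha I-N)$, are exactly the mechanism used there, and your handling of the non-Hermitian case via the adjoint trick $\|C_M\|_2=\|C_M^*\|_2$ with $C_M^*=(\alpha I-M^*)(\alpha I+M^*)^{-1}$ is clean. The strict inequality $\|C_M\|_2<1$ relies on compactness of the unit sphere (finite dimension), which you use implicitly; that is fine here since the paper works in $\mathbb{R}^{n\times n}$. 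Nothing is missing.
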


Many classical ADI schemes fall under the GADI framework through appropriate
choices of \(M\), \(N\), and \(\omega\). Examples include:
\begin{itemize}
    \item Peaceman-Rachford splitting (equivalently, HSS) with 
        Hermitian/skew-Hermitian splitting \(A=H+S\) for non-Hermitian positive
        definite linear systems~\cite{peaceman1955numerical,bai2003hermitian}.
    \item Douglas-Rachford splitting (DRS), widely used for parabolic PDEs such
        as heat conduction and related time-stepping 
        splits~\cite{douglas1956numerical}.
    \item Normal/skew-Hermitian splitting (NSS) and 
        positive-definite/skew-Hermitian splitting (PSS), targeting classes of 
        saddle-point and non-Hermitian systems~\cite{benzi2003,2007On}.
    \item Generalized HSS variants and ADI methods for matrix equations (e.g., 
        Sylvester/Lyapunov), prominent in systems and 
        control~\cite{BENNER20091035,bai2011hermitian,WANG20132352}.
\end{itemize}

\subsection{Similarities between GADI and Iterative Refinement}
\label{subsec:mp_ir}
Iterative Refinement (IR) is one of the most popular, 
efficient, and robust methods for leveraging low precision arithmetics to 
solve~\eqref{eq:linear_equation} while still providing a high accuracy 
solution. The mixed precision generalized IR scheme proposed 
in~\cite{doi:10.1137/17M1140819} repeats the following three steps until 
convergence:
\begin{enumerate}
    \item Compute the residual $r^{(k)} = b - Ax^{(k)}$ in extra precision
        $u_{r}$ (e.g., FP64 or FP128 for a FP64 solution accuracy).
    \item Compute the correction $y^{(k)}$ by solving the linear system
        $Ay^{(k)} = r^{(k)}$ with a computationally effective but inaccurate 
        linear solver in precision $u_{s}$. Here, we loosely refer to $u_s$
        as the ``precision'' of the solver for simplicity of exposition. 
        However, we emphasize that it is instead a more general measure of the
        quality of the solutions computed by the inaccurate solver, which can 
        itself be in mixed precision or leverage other forms of numerical 
        approximations. \label{step:ir-correq}
    \item Compute the solution update \(x^{(k+1)} = x^{(k)} + y^{(k)}\) in
        working precision $u$ (e.g., FP64 for a FP64 solution accuracy).
\end{enumerate}
Various IR implementations were shown to be highly successful at significantly
reducing the computational resources required for dense~\cite{htdh18} and 
sparse~\cite{abhl23,lld22} linear systems arising from real-life applications 
and running on modern HPC architectures.

While IR is not the direct topic of this article, the observation that it 
shares algorithmic similarities with GADI motivates the mixed precision
approach we develop. Indeed, the GADI iteration of 
section~\ref{subsec:gadi_framework} can be rewritten in a four-step form as
outlined by Algorithm~\ref{alg1}. Hence, similarly to IR, GADI computes at
each iteration $k$ the residual $r^{(k)}$ (line~\ref{line:alg1-res} of 
Algorithm~\ref{alg1}) from which it extracts a correction $y^{(k)}$ 
(lines~\ref{line:alg1-sys1} and~\ref{line:alg1-sys2} of Algorithm~\ref{alg1}).
It then updates the solution using this correction to obtain the next iterate
$x^{(k+1)}$ (line~\ref{line:alg1-update} of Algorithm~\ref{alg1}). These 
similarities are actually not surprising and originate from the fact that both
procedures are stationary iterative 
methods~\cite[Section~5.5.1]{LiesenStrakos2013}.

\subsection{A mixed precision scheme for GADI}

Our intention is therefore to adapt the well-known and successful mixed 
precision IR scheme to GADI. The resulting mixed precision GADI algorithm is
represented by Algorithm~\ref{alg1}. Specifically, we use an extra precision $u_r$ to compute
the residual at line~\ref{line:alg1-res}, we use the working precision $u$ to 
compute the update at line~\ref{line:alg1-update}, and we use a low precision 
$u_s$ to solve the subsystems at lines~\ref{line:alg1-sys1} 
and~\ref{line:alg1-sys2}, which are (in most cases) the resource intensive part
of the GADI iteration. The rest of this article focuses on establishing the relevance and efficiency of
this proposed mixed precision algorithm. To achieve this, we proceed in two stages. First, in 
section~\ref{sec:ea}, we give theoretical guarantees that mixed precision
GADI still computes high accuracy solutions even while leveraging low
precision. Second, in section~\ref{sec:ne}, we demonstrate that this 
mixed precision approach can effectively reduce resource consumption for the 
solution of real-life large problems.

\begin{algorithm}
    \caption{Mixed Precision GADI}
    \label{alg1}
    \begin{algorithmic}[1]
        \Require: $\alpha,\omega,M,N,k=0,x^{(0)}=0$.
        \Repeat
        \State{Compute the residual $r^{(k)}=b-Ax^{(k)}$.}\Comment{$u_r$}
        \label{line:alg1-res}
        \State{Solve $(\alpha I +
        M)z^{(k)} = r^{(k)}$.}\Comment{$u_s$}
        \label{line:alg1-sys1}
        \State{Solve $(\alpha I +
        N)y^{(k)} = (2-\omega)\alpha z^{(k)}$.}\Comment{$u_s$}
        \label{line:alg1-sys2}
        \State{Compute the next iterate
        $x^{(k+1)}=x^{(k)}+y^{(k)}$.}\Comment{$u_{\phantom{s}}$}
        \label{line:alg1-update}
        \State $k = k+1$.
  \Until{convergence}
    \end{algorithmic}
\end{algorithm}

While similar to an extent, it is worth noting that GADI and IR are also 
intrinsically different. In the case of GADI, $y^{(k)}$ is not computed as an 
(inaccurate) solution of $Ay^{(k)} = r^{(k)}$. Namely, 
lines~\ref{line:alg1-sys1} and~\ref{line:alg1-sys2} of Algorithm~\ref{alg1} are
not solving the system $Ay^{(k)} = r^{(k)}$ as for step~\ref{step:ir-correq} 
of IR in section~\ref{subsec:mp_ir}. It means, in particular, that we cannot simply exploit theoretical 
results of IR developed, for instance, in~\cite{doi:10.1137/17M1140819}, to
carry out our rounding error analysis of mixed precision GADI.

\section{Rounding error analysis of mixed precision GADI}
\label{sec:ea}

The goal of the rounding error analysis of mixed precision GADI developed in 
this section is quite similar to classic analyses of IR. That is, we aim to
determine the convergence rates and the limiting accuracies of the computed 
solution errors as functions of the precisions $u$, $u_r$, $u_s$, and the
properties of the inputs. Our main conclusions are embodied by Theorem~\ref{thm2} 
and~\ref{thm3}, which concern, respectively, the forward error of the computed
solution defined as 
\begin{equation}\label{eq:429833}
    \frac{\norm{\widehat{x}^{(k)}-x}}{\norm{x}},
\end{equation}
and the normwise backward error defined as \cite[Section~7.1]{doi:10.1137/1.9780898718027}
\begin{equation}\label{eq:231477}
    \min \big\{\, \varepsilon: (A+\Delta A)\widehat{x}^{(k)} = b+\Delta b, \; 
    \|\Delta A\|
    \le \varepsilon \norm{A},
                        \; \|\Delta b\| \le \varepsilon \norm{b} \,\big\}
            =  \frac{\norm{b -
            A\widehat{x}^{(k)}}}{\norm{A}\,\norm{\widehat{x}^{(k)}} + 
            \norm{b}},
\end{equation}
where $\widehat{x}^{(k)}$ refers to the computed solution at the $k$th GADI 
iteration, and $x$ is the exact solution of~\eqref{eq:linear_equation}.

\subsection{Notations and assumptions}
\label{sec:notations}
We now summarize our notations and assumptions. Throughout the analysis, we use
the matrix and vector 2-norm. For a nonsingular matrix $A$, we use the normwise
condition number \(\kappa(A)=\|A\|\cdot\|A^{-1}\|\). Our analysis makes use of 
the three precision parameters $u$, $u_{r}$, and $u_{s}$, which can refer to
both the floating-point arithmetic or its unit roundoff, depending on the
context. Throughout our rounding error analysis, these three precision 
parameters are unspecified and can represent any floating-point arithmetic;
see examples of arithmetics that can be used in Table~\ref{tb1}. We only
require that $u_r\leq u \leq u_s$, which means that $u$ is more accurate than (or
equal to) $u_s$, and $u_r$ is more accurate than (or equal to) $u$.

We adopt the standard model of floating-point 
arithmetic~\cite[Section~2.2]{doi:10.1137/1.9780898718027}. The computed value
of a given expression in floating-point with unit roundoff 
$\nu \in \{u,u_r,u_s\}$ is denoted by $\fl_\nu(\cdot)$, and we put a hat on
variables to indicate that they represent computed quantities. 

The error bounds derived in our analysis depend on some constants related to
the problem dimension $n$. As our analysis is a traditional worst-case
analysis, these constants are known to be pessimistic~\cite{hima19a}. For this reason, we do
not keep track of their precise values and gather them into generic functions
$c(n)$. We ensure that these functions are polynomials in $n$ of low degree.

For convenience, we use the notations $\lesssim$ and $\approx$ when dropping 
negligible second-order terms in the error bounds. Hence, writing $a\lesssim b$
for some positive scalars $a$ and $b$ means 
$a\le (1+\mathcal{O}(u)+\mathcal{O}(u_r)+\mathcal{O}(u_s))b$.

In order to obtain sharper error bounds, we will make use of the term $\mu^{(k)}$
defined as 
\begin{equation}\label{eq:947298}
    \norm{b - A\widehat{x}^{(k)}} = \norm{A(x - \widehat{x}^{(k)})} = \mu^{(k)} \norm{A} \norm{x -
    \widehat{x}^{(k)}},
\end{equation}
and satisfying for all GADI iterations $k\geq 0$
\begin{equation}\label{eq:342080}
    \kappa(A)^{-1} \leq \mu^{(k)} \leq 1.
\end{equation}
This is a key quantity that has been introduced to sharpen the rounding
error analyses of mixed precision IR~\cite{cahi17,doi:10.1137/17M1140819}.
Specifically, in~\cite[section~2.1]{cahi17}, it is explained that we can expect
$\mu^{(k)}$ to be close to the ratio between the backward~\eqref{eq:231477} and
forward~\eqref{eq:429833} errors. This ratio can be as low as
$\kappa(A)^{-1}$ for the first iterations of IR and GADI.

We define the matrices $H\in\mathbb{R}^{n\times n}$ and 
$S\in\mathbb{R}^{n\times n}$ as
\begin{equation}
\begin{aligned}
    A=M+N, \quad H=\alpha I + M, \quad S=\alpha I + N,
\end{aligned}
\label{eq:split}
\end{equation}
where \(\alpha > 0\) is the GADI regularization parameter. In addition, we 
assume that $H$ and $S$ are numerically nonsingular relative to the precision
$u_s$, so that the linear systems with $H$ and $S$ at 
lines~\ref{line:alg1-sys1} and~\ref{line:alg1-sys2} of Algorithm~\ref{alg1} are well-posed:
\begin{equation}\label{eq:293478}
    c(n) \kappa(H) u_s < 1/2 \qquad \mbox{and} \qquad c(n) \kappa(S) u_s <
    1/2.
\end{equation}

Finally, we need some assumptions on the accuracies of the linear solvers used
at lines~\ref{line:alg1-sys1} and~\ref{line:alg1-sys2} of Algorithm~\ref{alg1}.
More specifically, we assume that these solvers compute the linear systems
\[
H\, z^{(k)} = r^{(k)} \quad \text{and} \quad S\,
y^{(k)} = (2-\omega)\alpha \, z^{(k)}
\]
in a backward stable manner, such that the computed $\widehat{z}^{(k)}$ and 
$\widehat{y}^{(k)}$ satisfy, respectively,
\begin{equation}
\begin{gathered}
    (H + F_k^{\scriptscriptstyle H})\widehat{z}^{(k)} = \widehat{r}^{(k)} \quad \text{and} \quad  (S +
    F_k^{\scriptscriptstyle S})\widehat{y}^{(k)} = (2 - \omega)\alpha \widehat{z}^{(k)},\\
    \|F_k^{\scriptscriptstyle H}\|\leq c(n)\,u_s\,\|H\| \quad \text{and} \quad
    \|F_k^{\scriptscriptstyle S}\|\leq c(n)\,u_s\,\|S\|.
\end{gathered}
\label{eq:backward_stable}
\end{equation}

All our implementations of mixed precision GADI in section~\ref{sec:ne} use the Conjugate Gradient (CG) algorithm to solve the two 
GADI subsystems. However, note that CG is technically not
proven backward stable, at least in the usual sense, and therefore cannot be 
shown to always meet our assumptions~\eqref{eq:backward_stable}. Yet, under 
reasonable assumptions on the maximum number of CG iterations and the norms of
the computed CG iterates, CG will eventually compute solutions 
satisfying~\eqref{eq:backward_stable}; see~\cite[Theorem~2]{bcm25}. 

\subsection{Convergence of the forward error}
We first tackle the convergence of the forward error~\eqref{eq:429833} of mixed
precision GADI. Before presenting our main Theorem~\ref{thm2}, we recall some
results that we will use in our analysis. We present them in the following
lemmata.

\begin{lemma}[\cite{doi:10.1137/17M1140819} Section 3]
For \( A \in \mathbb{R}^{n \times n} \), \( b \in \mathbb{R}^n \), 
    \( \widehat{x}^{(k)}\in
    \mathbb{R}^n \), and $\widehat{y}^{(k)} \in \mathbb{R}^n$,
    the residual \( \widehat{r}^{(k)} \) and the next iterate
    $\widehat{x}^{(k+1)}$ computed, respectively, at line~\ref{line:alg1-res}
    and~\ref{line:alg1-update} of Algorithm~\ref{alg1} satisfy
\begin{align}
    \widehat{r}^{(k)} & = b - A\widehat{x}^{(k)} + \Delta \widehat{r}^{(k)}, \label{eq:r_k_def} \\
    \widehat{x}^{(k+1)} & = \widehat{x}^{(k)} + \widehat{y}^{(k)} + \Delta
    \widehat{x}^{(k)}, \label{eq:x_k_plus_1_def}
\end{align}
    with
\begin{align}
    \|\Delta \widehat{r}^{(k)}\| &\leq u_s \|b - A\widehat{x}^{(k)}\| + c(n)(1 +
    u_s)u_r (\|A\|\|\widehat{x}^{(k)}\| + \|b\|), \label{eq:r_k_bound} \\
    \|\Delta \widehat{x}^{(k)}\| &\leq  u  (\|\widehat{x}^{(k)}\| + \|\widehat{y}^{(k)}\|), \label{eq:x_k_plus_1_bound}
\end{align}
where $c(n)$ denotes a generic low-degree polynomial in the problem
dimension $n$. Note that $\Delta \widehat{r}^{(k)}$ also contains the error
    introduced while casting the residual from high precision $u_r$ to a lower
    precision $u_s$.
\label{lem:error_analysis}
\end{lemma}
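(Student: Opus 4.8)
The plan is to obtain \eqref{eq:r_k_def}--\eqref{eq:x_k_plus_1_bound} by a direct application of the standard model of floating-point arithmetic to the two operations in question — the residual evaluation at line~\ref{line:alg1-res} and the solution update at line~\ref{line:alg1-update} — treating $\widehat{x}^{(k)}$ and $\widehat{y}^{(k)}$ as exact inputs, since the errors committed while producing them are accounted for separately elsewhere in the analysis. The two operations are analyzed independently, and in each case the work reduces to standard worst-case rounding-error accounting of the kind in \cite[Chapters~2--3]{doi:10.1137/1.9780898718027}.

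For the residual I would proceed in three stages, matching the way line~\ref{line:alg1-res} is executed. First, the matrix--vector product $A\widehat{x}^{(k)}$ is formed in precision $u_r$; the classical forward error bound for matrix--vector products \cite[Section~3.5]{doi:10.1137/1.9780898718027} gives $\fl_{u_r}(A\widehat{x}^{(k)}) = A\widehat{x}^{(k)} + f_1$ with $|f_1| \le c(n)\,u_r\,|A|\,|\widehat{x}^{(k)}|$ entrywise, hence $\|f_1\| \le c(n)\,u_r\,\|A\|\,\|\widehat{x}^{(k)}\|$ after passing to the $2$-norm and absorbing the dimensional factor into $c(n)$. Second, subtracting this from $b$ in precision $u_r$ adds a further error $f_2$ with $\|f_2\| \le u_r\big(\|b\| + \|A\widehat{x}^{(k)}\| + \|f_1\|\big) \le c(n)\,u_r(\|A\|\,\|\widehat{x}^{(k)}\| + \|b\|)$. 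Third, casting the result down to the solver precision $u_s$ multiplies each entry by a factor $1+\delta$, $|\delta|\le u_s$, contributing an error $\delta\,(b - A\widehat{x}^{(k)} + f_1 + f_2)$ of norm at most $u_s\|b - A\widehat{x}^{(k)}\| + u_s\|f_1 + f_2\|$. Setting $\Delta\widehat{r}^{(k)} := f_1 + f_2 + (\text{cast error})$ gives \eqref{eq:r_k_def}, and collecting the three contributions — so that the $u_s$-level term multiplies only the true residual $\|b-A\widehat{x}^{(k)}\|$ while the $u_r$-level terms carry the factor $1+u_s$ — yields exactly \eqref{eq:r_k_bound}.

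The update is immediate: line~\ref{line:alg1-update} is the single floating-point vector sum $\widehat{x}^{(k+1)} = \fl_u(\widehat{x}^{(k)} + \widehat{y}^{(k)})$, so componentwise $\widehat{x}^{(k+1)}_i = (\widehat{x}^{(k)}_i + \widehat{y}^{(k)}_i)(1+\delta_i)$ with $|\delta_i|\le u$; defining $\Delta\widehat{x}^{(k)} := \widehat{x}^{(k+1)} - \widehat{x}^{(k)} - \widehat{y}^{(k)}$ gives \eqref{eq:x_k_plus_1_def} and $\|\Delta\widehat{x}^{(k)}\| \le u\,\|\widehat{x}^{(k)} + \widehat{y}^{(k)}\| \le u(\|\widehat{x}^{(k)}\| + \|\widehat{y}^{(k)}\|)$, which is \eqref{eq:x_k_plus_1_bound}. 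The only mildly delicate point in the whole argument is the bookkeeping of the chained errors in the residual — product error, subtraction error, and down-cast error — which must be combined so that the $u_s u_r$ cross terms are absorbed into the $(1+u_s)$ factor rather than left loose; since the statement is otherwise standard and is quoted from \cite[Section~3]{doi:10.1137/17M1140819}, I would present this accounting compactly and omit the elementary intermediate estimates.
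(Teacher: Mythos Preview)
Your derivation is correct and is exactly the standard rounding-error accounting one would use to establish these bounds. Note, however, that the paper does not supply its own proof of this lemma: it is stated as a cited result from \cite[Section~3]{doi:10.1137/17M1140819} and used without further justification, so there is no paper proof to compare against beyond the original reference, whose argument your proposal faithfully reproduces.
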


\begin{lemma}[\cite{10.5555/500666} Corollary~4.19]\label{lem:384729_corrected}
Let \( B \in \mathbb{R}^{n \times n} \) be an invertible matrix, and let 
    \( F_k \in \mathbb{R}^{n \times n} \) be a perturbation matrix. Define the following
\begin{itemize}
  \item \( J_k \in \mathbb{R}^{n \times n} \), a matrix defined as \( J_k = (B
      + F_k)^{-1}B - I \).
  \item \( P_k \in \mathbb{R}^{n \times n} \), a matrix defined as \( P_k = B
      (B + F_k)^{-1} - I \).
\end{itemize}
Assume that the perturbation satisfies \( \|F_k\| \leq c(n) u_s  \|B\| \) and 
the condition $c(n) \kappa(B) u_s < 1/2$ holds, where $c(n)$ are low-degree
polynomials in the problem dimension $n$. Then
\begin{enumerate}
  \item The matrix \( B + F_k \) is nonsingular.
  \item The inverse of \( B + F_k \) can be expressed in two equivalent forms
  \begin{align*}
    (B + F_k)^{-1} = (I + J_k) B^{-1} = B^{-1} (I + P_k).
  \end{align*}
  \item The spectral norms of \(J_k\) and \(P_k\) satisfy
  \begin{align*}
    \|J_k\| \leq \frac{c(n) \, \kappa(B) \, u_s}{1 - c(n) \, \kappa(B) \, u_s} \leq 1, \quad
    \|P_k\| \leq \frac{c(n) \, \kappa(B) \, u_s}{1 - c(n) \, \kappa(B) \, u_s} \leq 1.
  \end{align*}
\end{enumerate}
\label{lem1_corrected}
\end{lemma}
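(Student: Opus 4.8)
The plan is to reduce the statement to the Neumann series for a matrix that is close to the identity. First I would factor the perturbed matrix on each side, writing $B + F_k = B(I + E)$ with $E := B^{-1}F_k$ and $B + F_k = (I + \widetilde{E})B$ with $\widetilde{E} := F_k B^{-1}$. By submultiplicativity of the $2$-norm and the hypothesis $\norm{F_k} \le c(n)\,u_s\,\norm{B}$, both perturbations are small:
\[
    \norm{E} \le \norm{B^{-1}}\,\norm{F_k} \le c(n)\,\kappa(B)\,u_s < \tfrac12,
    \qquad
    \norm{\widetilde{E}} \le c(n)\,\kappa(B)\,u_s < \tfrac12,
\]
the strict bound being precisely the standing assumption $c(n)\,\kappa(B)\,u_s < 1/2$.

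Since $\norm{E} < 1$, the Neumann series $\sum_{j \ge 0}(-E)^j$ converges, so $I + E$ is invertible with $\norm{(I+E)^{-1}} \le 1/(1 - \norm{E})$, and similarly for $I + \widetilde{E}$. Consequently $B + F_k = B(I+E)$ is a product of invertible matrices and hence nonsingular, which establishes part~1. Inverting the two factorizations yields $(B + F_k)^{-1} = (I+E)^{-1}B^{-1} = B^{-1}(I+\widetilde{E})^{-1}$. Substituting these into the definitions of $J_k$ and $P_k$ makes them collapse, respectively, to
\[
    J_k = (B+F_k)^{-1}B - I = (I+E)^{-1} - I,
    \qquad
    P_k = B(B+F_k)^{-1} - I = (I+\widetilde{E})^{-1} - I,
\]
from which the equivalent forms $(B+F_k)^{-1} = (I + J_k)B^{-1} = B^{-1}(I + P_k)$ of part~2 follow by adding $I$ and multiplying by $B^{-1}$ on the appropriate side.

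For part~3, I would use the identity $(I+E)^{-1} - I = -E(I+E)^{-1}$, so that
\[
    \norm{J_k} \le \norm{E}\,\norm{(I+E)^{-1}} \le \frac{\norm{E}}{1 - \norm{E}}
    \le \frac{c(n)\,\kappa(B)\,u_s}{1 - c(n)\,\kappa(B)\,u_s},
\]
where the last step uses that $t \mapsto t/(1-t)$ is increasing on $[0,1)$; the same chain with $\widetilde{E}$ bounds $\norm{P_k}$. The trailing ``$\le 1$'' is then immediate because $c(n)\,\kappa(B)\,u_s < 1/2$ forces $c(n)\,\kappa(B)\,u_s/(1 - c(n)\,\kappa(B)\,u_s) < 1$.

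The argument is essentially routine and I do not anticipate a genuine obstacle; the only point requiring care is the bookkeeping of the generic constant $c(n)$, which must be the \emph{same} low-degree polynomial in the hypothesis $\norm{F_k} \le c(n)u_s\norm{B}$, in the condition $c(n)\kappa(B)u_s < 1/2$, and in the conclusion. I would therefore fix one such $c(n)$ at the outset and carry it unchanged through all three parts.
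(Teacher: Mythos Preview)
Your argument is correct and is the standard Neumann-series proof of this perturbation result. The paper does not give its own proof of this lemma; it is stated with a citation to an external reference and used as a tool in the subsequent analysis, so there is nothing to compare against beyond noting that your derivation is sound.
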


We can now state the following theorem, which quantifies the convergence rate
(noted $\beta_{F}$)
and limiting accuracy (noted $\zeta_{F}$) of the forward errors of the mixed precision GADI
iterates.

\begin{theorem}
    Let Algorithm~\ref{alg1} be applied to the linear system \(Ax = b\), where
    \(A \in \mathbb{R}^{n \times n}\) is nonsingular. Assume that $H$ and
    $S$ defined in~\eqref{eq:split} satisfy assumption~\eqref{eq:293478}, and 
    that the solvers used at lines~\ref{line:alg1-sys1}
    and~\ref{line:alg1-sys2} of Algorithm~\ref{alg1} satisfy
    assumption~\eqref{eq:backward_stable}. 
    Then, for \(k \geq 0\), the computed 
    iterate \(\widehat{x}^{(k+1)}\) satisfies
    \begin{equation}\label{eq:342873}
        \|x - \widehat{x}^{(k+1)}\|  \lesssim \beta_F \|x -
        \widehat{x}^{(k)}\| + \zeta_F \|x\|,
    \end{equation}
    with
    \begin{align*}
        \beta_F & = \lambda_F^{(k)} + c(n)c_F \min\Big\{
        \kappa(HS)\kappa(H), \kappa(A)\big(\kappa(H) + \kappa(S)\big),
        \kappa(H)\kappa(S) \Big\}u_s \\
    & \quad + c(n)c_F \mu^{(k)}\kappa(A) u_s + c(n)c_F\kappa(A)u_r + c(n)c_F u, \\
        \zeta_F & = c(n) \Big(u + c_F \kappa(A) u_r \Big),
    \end{align*}
    where \(\lambda_F^{(k)} = \|T_F(\alpha, \omega) (x - \widehat{x}^{(k)}) \| /
    \norm{x - \widehat{x}^{(k)}}\), $T_F(\alpha, \omega)$ is the GADI iteration
    matrix defined in
    section~\ref{subsec:gadi_framework}, $\mu^{(k)}$ is defined in~\eqref{eq:947298}, and
    \(c_F = \|I - T_F(\alpha, \omega)\|\). 
    \label{thm2}
\end{theorem}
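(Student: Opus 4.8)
The plan is to bound, at each step, the one-step rounding error between the computed $\widehat{x}^{(k+1)}$ and the iterate $\widetilde{x}^{(k+1)}$ obtained by applying one \emph{exact} GADI step to the \emph{computed} $\widehat{x}^{(k)}$. With $H=\alpha I+M$, $S=\alpha I+N$ as in~\eqref{eq:split} and $C:=(2-\omega)\alpha S^{-1}H^{-1}$, the identity $\alpha^2 I+MN-(1-\omega)\alpha A = HS-(2-\omega)\alpha A$ (which underlies the definition of $T_F(\alpha,\omega)$ in section~\ref{subsec:gadi_framework}) gives $CA=I-T_F(\alpha,\omega)$, hence $\widetilde{x}^{(k+1)}:=\widehat{x}^{(k)}+C(b-A\widehat{x}^{(k)}) = T_F(\alpha,\omega)\widehat{x}^{(k)}+G(\alpha,\omega)$. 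Since $x = T_F(\alpha,\omega)x + G(\alpha,\omega)$, this yields the exact decomposition
\begin{equation*}
    x-\widehat{x}^{(k+1)} = T_F(\alpha,\omega)\,(x-\widehat{x}^{(k)}) + \big(\widetilde{x}^{(k+1)}-\widehat{x}^{(k+1)}\big),
\end{equation*}
whose first term has norm exactly $\lambda_F^{(k)}\|x-\widehat{x}^{(k)}\|$. Everything then reduces to bounding the one-step error $\widetilde{x}^{(k+1)}-\widehat{x}^{(k+1)}$.

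To assemble it, I would combine Lemma~\ref{lem:error_analysis} (which gives $\widehat{r}^{(k)}=(b-A\widehat{x}^{(k)})+\Delta\widehat{r}^{(k)}$, $\widehat{x}^{(k+1)}=\widehat{x}^{(k)}+\widehat{y}^{(k)}+\Delta\widehat{x}^{(k)}$ with the bounds~\eqref{eq:r_k_bound}--\eqref{eq:x_k_plus_1_bound}), the backward-stability assumption~\eqref{eq:backward_stable}, and Lemma~\ref{lem1_corrected} applied successively to $B=H$ and $B=S$ (valid under~\eqref{eq:293478}). This gives $\widehat{y}^{(k)}=(2-\omega)\alpha(I+J_k^{\scriptscriptstyle S})S^{-1}(I+J_k^{\scriptscriptstyle H})H^{-1}\widehat{r}^{(k)}$ with $\|J_k^{\scriptscriptstyle H}\|,\|P_k^{\scriptscriptstyle H}\|\lesssim c(n)\kappa(H)u_s$ and $\|J_k^{\scriptscriptstyle S}\|,\|P_k^{\scriptscriptstyle S}\|\lesssim c(n)\kappa(S)u_s$. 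Expanding the product, $\widehat{y}^{(k)}=(C+E_k)\widehat{r}^{(k)}$ with $E_k = (2-\omega)\alpha\big[J_k^{\scriptscriptstyle S}S^{-1}H^{-1}+S^{-1}J_k^{\scriptscriptstyle H}H^{-1}\big]$ up to a second-order term. Subtracting the exact step $C(b-A\widehat{x}^{(k)})$ and discarding products of small quantities leaves
\begin{equation*}
    \|\widetilde{x}^{(k+1)}-\widehat{x}^{(k+1)}\| \lesssim \|C\Delta\widehat{r}^{(k)}\| + \|E_k(b-A\widehat{x}^{(k)})\| + \|\Delta\widehat{x}^{(k)}\|.
\end{equation*}

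For the first term I would use $C=(I-T_F(\alpha,\omega))A^{-1}$, so $\|C\|\|A\|\le c_F\kappa(A)$, together with~\eqref{eq:r_k_bound}, the identity $\|b-A\widehat{x}^{(k)}\|=\mu^{(k)}\|A\|\|x-\widehat{x}^{(k)}\|$ from~\eqref{eq:947298}, and $\|b\|\le\|A\|\|x\|$, $\|\widehat{x}^{(k)}\|\le\|x\|+\|x-\widehat{x}^{(k)}\|$; this produces the $c_F\mu^{(k)}\kappa(A)u_s$ and $c_F\kappa(A)u_r$ contributions to $\beta_F$ and the $c_F\kappa(A)u_r$ term of $\zeta_F$. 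For $\|\Delta\widehat{x}^{(k)}\|$ I would use~\eqref{eq:x_k_plus_1_bound} with $\|\widehat{y}^{(k)}\|\lesssim c_F\|x-\widehat{x}^{(k)}\|$ (up to lower-order terms, since the exact step equals $(I-T_F(\alpha,\omega))(x-\widehat{x}^{(k)})$), giving the $c_F u$ term of $\beta_F$ and the $u$ term of $\zeta_F$. The crux is the middle term: $\|E_k(b-A\widehat{x}^{(k)})\| = \|E_k A(x-\widehat{x}^{(k)})\| \le \|E_k A\|\,\|x-\widehat{x}^{(k)}\|$, and $E_k A$ is bounded by pushing $A$ through $E_k$ and repeatedly using $CA=I-T_F(\alpha,\omega)$, $(2-\omega)\alpha H^{-1}A=S(I-T_F(\alpha,\omega))$ and $J_k^{\scriptscriptstyle H}H^{-1}=H^{-1}P_k^{\scriptscriptstyle H}$. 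The $J_k^{\scriptscriptstyle S}$-term collapses to $J_k^{\scriptscriptstyle S}(I-T_F(\alpha,\omega))$, of norm $\lesssim c(n)\kappa(S)u_s c_F$; the $J_k^{\scriptscriptstyle H}$-term can be routed three ways — through $\|C\|\|A\|\le c_F\kappa(A)$, through $\|C\|\|A\|\le c_F\kappa(HS)$, or through a conjugation by $S$ — giving $\lesssim c(n)c_F\kappa(H)\min\{\kappa(A),\kappa(HS),\kappa(S)\}u_s$. Summing the two and folding the leftover $\kappa(S)$ into each branch via $\kappa(S)\le\kappa(H)\kappa(HS)$, $\kappa(S)\le\kappa(A)\kappa(S)$, $\kappa(S)\le\kappa(H)\kappa(S)$ yields $\|E_k A\|\lesssim c(n)c_F\min\{\kappa(HS)\kappa(H),\kappa(A)(\kappa(H)+\kappa(S)),\kappa(H)\kappa(S)\}u_s$, i.e.\ the last term of $\beta_F$. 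Summing the four contributions with the $\lambda_F^{(k)}$ term gives~\eqref{eq:342873}.

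The main obstacle is this $E_k A$ bound: each perturbation matrix $F_k^{\scriptscriptstyle H},F_k^{\scriptscriptstyle S}$ (equivalently $J_k^{\scriptscriptstyle H/S},P_k^{\scriptscriptstyle H/S}$) must be placed — using the appropriate one of the two equivalent inverse representations of Lemma~\ref{lem1_corrected} — so that the surrounding factors collapse to $I-T_F(\alpha,\omega)$, contributing a single benign $c_F$ instead of spurious $\kappa(H)$ or $\kappa(S)$ powers; this bookkeeping is exactly what delivers the three-way minimum and the sharper limiting accuracy claimed over earlier ADI analyses. A secondary, routine but error-prone task is to split every $\widehat{x}^{(k)}$-dependent norm systematically into an $\|x-\widehat{x}^{(k)}\|$-part (feeding $\beta_F$) and an $\|x\|$-part (feeding $\zeta_F$), and to check that all discarded cross terms are genuinely second order under~\eqref{eq:293478} and $u_r\le u\le u_s$, which is what justifies each use of $\lesssim$.
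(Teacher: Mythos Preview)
Your proposal is correct and follows essentially the same route as the paper: the same exact/perturbed decomposition via the identity $CA=I-T_F(\alpha,\omega)$, the same use of Lemmas~\ref{lem:error_analysis} and~\ref{lem1_corrected} to expand $\widehat{y}^{(k)}$, and the same three alternative bounds on the perturbation term (your $E_kA$, the paper's~\eqref{eq:395348}--\eqref{eq:233487}) that produce the three-way minimum. The only cosmetic difference is that you name $C$ and $E_k$ explicitly while the paper works directly with $(I+J_k^{\scriptscriptstyle S})(HS)^{-1}(I+P_k^{\scriptscriptstyle H})$; your shorthand ``$\|C\|\|A\|\le c_F\kappa(HS)$'' for the second route is a slight misstatement (the paper inserts $HS(HS)^{-1}$ after $P_k^{\scriptscriptstyle H}$ instead), but the intended bound is the same.
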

\begin{proof}~
To form~\eqref{eq:342873}, we wish to bound the error at the $(k+1)$th 
    iteration $\|x - \widehat{x}^{(k+1)}\|$ in
    terms of the error at the $k$th iteration $\|x - \widehat{x}^{(k)}\|$. 
    Using~\eqref{eq:x_k_plus_1_def}, we have
\begin{align*}
    x - \widehat{x}^{(k+1)} = (x - \widehat{x}^{(k)}) - \widehat{y}^{(k)} - \Delta \widehat{x}^{(k)}.
\end{align*}
Using assumption~\eqref{eq:backward_stable}, which characterizes the accuracies
of the solvers at lines~\ref{line:alg1-sys1} and~\ref{line:alg1-sys2} of 
Algorithm~\ref{alg1}, we can write \(\widehat{y}^{(k)}\) as
    \begin{equation}
        \begin{aligned}
    \widehat{y}^{(k)} = (2-\omega)\alpha (S+F_k^{\scriptscriptstyle S})^{-1}\widehat{z}^{(k)} =
            (2-\omega)\alpha (S+F_k^{\scriptscriptstyle S})^{-1}(H+F_k^{\scriptscriptstyle H})^{-1} \widehat{r}^{(k)},
    \label{eq:y_k_expression}             
\end{aligned}
    \end{equation}
where \(F_k^{\scriptscriptstyle S}\) and \(F_k^{\scriptscriptstyle H}\) are the perturbation matrices defined in~\eqref{eq:backward_stable}.
Note that $S$, \(F_k^{\scriptscriptstyle S}\), $H$, and \(F_k^{\scriptscriptstyle
    H}\) meet the conditions for the application of Lemma~\ref{lem1_corrected} by
    assumptions~\eqref{eq:293478} and~\eqref{eq:backward_stable}. Hence, by applying
    Lemma~\ref{lem1_corrected} with \(S\) and \(F_k^{\scriptscriptstyle S}\)
    yielding \(J_k^{\scriptscriptstyle S}\) and with \(H\) and
    \(F_k^{\scriptscriptstyle H}\) yielding \(P_k^{\scriptscriptstyle H}\), 
    and using~\eqref{eq:r_k_def}, we can further expand the expression of the 
    error
\begin{align*}
    x - \widehat{x}^{(k+1)} & = (x - \widehat{x}^{(k)}) -
    \alpha(2-\omega)(I+J_k^{\scriptscriptstyle S})(HS)^{-1}(I+P_k^{\scriptscriptstyle H})\widehat{r}^{(k)} - \Delta \widehat{x}^{(k)} \\
& = (x - \widehat{x}^{(k)}) - \alpha(2-\omega)(I+J_k^{\scriptscriptstyle
    S})(HS)^{-1}(I+P_k^{\scriptscriptstyle H})(b -
    A\widehat{x}^{(k)} + \Delta\widehat{r}^{(k)}) - \Delta \widehat{x}^{(k)}.
\end{align*}
Since \(b = Ax\), we can write \(b - A\widehat{x}^{(k)} = A(x - \widehat{x}^{(k)})\). Rearranging the terms yields
\begin{align*}
    x - \widehat{x}^{(k+1)} & = \Big(I -
    \alpha(2-\omega)(I+J_k^{\scriptscriptstyle
    S})(HS)^{-1}(I+P_k^{\scriptscriptstyle H})A\Big)(x - \widehat{x}^{(k)}) \\
    & \quad - \alpha(2-\omega)(I+J_k^{\scriptscriptstyle S})(HS)^{-1}(I+P_k^{\scriptscriptstyle H})\Delta\widehat{r}^{(k)} - \Delta \widehat{x}^{(k)} \\
    & = \Big(I - \alpha(2-\omega)(HS)^{-1}A\Big)(x - \widehat{x}^{(k)}) - \Delta \widehat{x}^{(k)}  
      - \alpha(2-\omega)(I+J_k^{\scriptscriptstyle S})(HS)^{-1}
     (I+P_k^{\scriptscriptstyle H})\Delta\widehat{r}^{(k)} \\
     & \quad - \alpha(2-\omega)\Big( (I+J_k^{\scriptscriptstyle S})(HS)^{-1}P_k^{\scriptscriptstyle H}A + J_k^{\scriptscriptstyle S}(HS)^{-1}A\Big)(x - \widehat{x}^{(k)}).
\end{align*}
By remarking that 
\begin{align}
    (HS)^{-1} = \frac{I - T_F(\alpha,\omega)}{\alpha(2-\omega)}A^{-1},
              \label{eq:HS_inverse}
\end{align}
where \(T_F(\alpha,\omega) = (HS)^{-1} (\alpha^2 I + MN - (1 - \omega) \alpha
A)\) (see section~\ref{subsec:gadi_framework}),
we can identify the iteration matrix $T_F(\alpha,\omega)$ in the expression of
the error $x - \widehat{x}^{(k+1)}$. We obtain
    \begin{equation}\label{eq:ineq1}
\begin{split}
    x - \widehat{x}^{(k+1)} & = T_F(\alpha,\omega)(x - \widehat{x}^{(k)}) - \Delta \widehat{x}^{(k)} - \alpha(2-\omega)(I+J_k^{\scriptscriptstyle
                       S})(HS)^{-1}(I+P_k^{\scriptscriptstyle H})\Delta\widehat{r}^{(k)}\\
                       & \quad - \alpha(2-\omega)\Big(
                       (I+J_k^{\scriptscriptstyle S})(HS)^{-1}P_k^{\scriptscriptstyle H}A +
                       J_k^{\scriptscriptstyle S}(HS)^{-1}A\Big)(x -
                       \widehat{x}^{(k)}).
\end{split}
    \end{equation}

We first consider the term $(I+J_k^{\scriptscriptstyle S})(HS)^{-1}P_k^{\scriptscriptstyle H}A + 
J_k^{\scriptscriptstyle S}(HS)^{-1}A$ in the error expression above~\eqref{eq:ineq1}. Using
the bounds on $\norm{J_k^{\scriptscriptstyle S}}$ and 
$\norm{P_k^{\scriptscriptstyle H}}$ in Lemma~\ref{lem1_corrected}, using the fact
that $\kappa(S) \leq \kappa(HS)\kappa(H)$, observing that $\norm{I +
J_k^{\scriptscriptstyle S}} \leq 2$, and denoting $c_F = 
\norm{I-T_F(\alpha, \omega)} = \alpha(2-\omega) \norm{(HS)^{-1}A}$,
we write
\begin{equation}\label{eq:395348}
\begin{split}
    \norm{(I+J_k^{\scriptscriptstyle
    S})(HS)^{-1}P_k^{\scriptscriptstyle H}A + J_k^{\scriptscriptstyle
    S} (HS)^{-1}A} 
    & = \norm{(I+J_k^{\scriptscriptstyle S})(HS)^{-1}P_k^{\scriptscriptstyle H} HS (HS)^{-1}A +
    J_k^{\scriptscriptstyle S}(HS)^{-1}A}\\
    & \lesssim
    c(n)\Big(\kappa(HS)\kappa(H) + \kappa(S)\Big) \norm{(HS)^{-1}A} u_s \\
    & \leq c(n)\frac{c_F}{\alpha(2-\omega)} \kappa(HS)\kappa(H) u_s.
\end{split}
\end{equation}
Or, alternatively, using $\norm{(HS)^{-1}}\norm{A}\leq 
c_F \kappa(A) / (\alpha(2-\omega))$, we can also bound this quantity as
\begin{equation}\label{eq:394278}
\begin{split}
    \norm{(I+J_k^{\scriptscriptstyle S})(HS)^{-1}P_k^{\scriptscriptstyle H}A + J_k^{\scriptscriptstyle
    S}(HS)^{-1}A} & \leq \norm{(HS)^{-1}}\norm{A} 
    \Big((\norm{I}+ \norm{J_k^{\scriptscriptstyle S}})\norm{P_k^{\scriptscriptstyle H}} + \norm{J_k^{\scriptscriptstyle
    S}}\Big) \\
    & \lesssim c(n)\frac{c_F}{\alpha(2-\omega)} \kappa(A)\Big(\kappa(H) +
    \kappa(S)\Big) u_s.
\end{split}
\end{equation}
Yet another alternative bound is possible by remarking that $H^{-1}
P_k^{\scriptscriptstyle H} H = J_k^{\scriptscriptstyle H}$, where
$J_k^{\scriptscriptstyle H} = (H+F_k^{\scriptscriptstyle H})^{-1} H - I$ 
from Lemma~\ref{lem1_corrected}. We then obtain
\begin{equation}\label{eq:233487}
\begin{split}
    \norm{(I+J_k^{\scriptscriptstyle S})(HS)^{-1}P_k^{\scriptscriptstyle H}A + J_k^{\scriptscriptstyle
    S}(HS)^{-1}A} & \leq \norm{(I+J_k^{\scriptscriptstyle S})S^{-1}(H^{-1}P_k^{\scriptscriptstyle H}H)S
    (HS)^{-1}A + J_k^{\scriptscriptstyle S}(HS)^{-1}A} \\
    & \leq \norm{(I+J_k^{\scriptscriptstyle S})S^{-1}J_k^{\scriptscriptstyle H}S + J_k^{\scriptscriptstyle
    S}} \norm{(HS)^{-1}A} \\
    & \lesssim c(n)\frac{c_F}{\alpha(2-\omega)} \kappa(H)\kappa(S) u_s.
\end{split}
\end{equation}

We now focus on the term $(I+J_k^{\scriptscriptstyle S})(HS)^{-1}(I+ 
P_k^{\scriptscriptstyle H})\Delta\widehat{r}^{(k)}$ in the error expression~\eqref{eq:ineq1}.
From Lemma~\ref{lem1_corrected}, we know that $\norm{I+J_k^{\scriptscriptstyle
S}}\leq 2$ and $\norm{I+P_k^{\scriptscriptstyle H}} \leq 2$. In addition, by
remarking that
\begin{align*}
    & \|A\|\|\widehat{x}^{(k)}\| + \|b\| =
\|A\|\|\widehat{x}^{(k)}\| + \|Ax\| \leq \|A\|\|\widehat{x}^{(k)} - x\| +
2\|A\|\|x\|,
\end{align*}
using~\eqref{eq:947298}, \eqref{eq:r_k_bound}, and $\norm{(HS)^{-1}}\norm{A}\leq 
c_F \kappa(A) / (\alpha(2-\omega))$, we obtain
\begin{equation}\label{eq:347938}
\begin{split}
    \norm{(I+J_k^{\scriptscriptstyle S})(HS)^{-1}(I+P_k^{\scriptscriptstyle H})\Delta\widehat{r}^{(k)}} 
    & \leq \norm{I+J_k^{\scriptscriptstyle S}} \norm{I+P_k^{\scriptscriptstyle
    H}} \norm{(HS)^{-1}} \norm{\Delta\widehat{r}^{(k)}} \\
    & \lesssim c(n) \norm{(HS)^{-1}} \Big(u_s \norm{b-A\widehat{x}^{(k)}} +
    u_r(\norm{A}\norm{\widehat{x}^{(k)}} + \norm{b}) \Big) \\
    & \leq c(n)\frac{c_F}{\alpha(2-\omega)} \kappa(A) \Big( (\mu^{(k)} u_s + u_r) \norm{x
    - \widehat{x}^{(k)}} + u_r \norm{x}\Big).
\end{split}
\end{equation}

Finally, we investigate the term $\Delta \widehat{x}^{(k)}$ in the error
expression~\eqref{eq:ineq1} and defined in Lemma~\ref{lem:error_analysis}.
To proceed, we need to bound \(\|\widehat{y}^{(k)}\|\).
Using~\eqref{eq:r_k_def}, \eqref{eq:r_k_bound}, \eqref{eq:y_k_expression},
Lemma~\ref{lem1_corrected}, and dropping second-order terms, we have
\begin{align*}
    \|\widehat{y}^{(k)}\| 
    \lesssim c(n) \alpha(2-\omega)\|(HS)^{-1}A\|\|x -
    \widehat{x}^{(k)}\| \leq c(n) c_F \|x - \widehat{x}^{(k)}\|.
\end{align*}
Using the above alongside~\eqref{eq:x_k_plus_1_bound}, we obtain
\begin{equation}\label{eq:432090}
    \norm{\Delta \widehat{x}^{(k)}} \leq u \Big(\norm{x - \widehat{x}^{(k)}} +
    \norm{x} +
    \norm{\widehat{y}^{(k)}}\Big) \lesssim u \norm{x} + c(n) c_F u
    \|x - \widehat{x}^{(k)}\|.
\end{equation}

We can now bound the norm of the error at the $(k+1)$th 
iteration~\eqref{eq:ineq1}. Combining~\eqref{eq:395348}, \eqref{eq:394278}, 
\eqref{eq:233487}, \eqref{eq:347938}, and~\eqref{eq:432090}, and denoting 
$\lambda_F^{(k)} = \|T_F(\alpha, \omega) (x - \widehat{x}^{(k)}) \| /
    \norm{x - \widehat{x}^{(k)}}$, we get
\begin{align*}
    \norm{x - \widehat{x}^{(k+1)}} & \lesssim \bigg( \lambda_F^{(k)} + c(n)c_F \min\Big\{
        \kappa(HS)\kappa(H), \kappa(A)\big(\kappa(H) + \kappa(S)\big),
        \kappa(H)\kappa(S) \Big\}u_s \\
    & \quad + c(n)c_F \mu^{(k)}\kappa(A)u_s + c(n)c_F\kappa(A)u_r + c(n)c_F u
    \bigg)
    \norm{x-\widehat{x}^{(k)}} + \bigg(u + c(n)c_F\kappa(A)u_r\bigg) \norm{x}.
\end{align*}
The previous bound simplifies to 
\(\|x - \widehat{x}^{(k+1)}\| \leq \beta_F\|x - \widehat{x}^{(k)}\| + \zeta_F\|x\|\), 
where \(\beta_F\) and \(\zeta_F\) are defined as in the theorem statement.
\end{proof}

%

Theorem~\ref{thm2} quantifies the contraction (or convergence rate) $\beta_F$ 
of the forward error at each GADI iteration. If $\beta_F< 1$, the forward error
is guaranteed to reduce until it reaches the limiting accuracy level 
$\zeta_F$. Within $\beta_F$, the term $\lambda_F^{(k)}$ is of particular
interest. We specifically require $\lambda_F^{(k)} < 1$ for all iterations
$k\geq i$ from a certain
iteration $i\geq 0$ for the GADI forward error to eventually converge. This necessary 
condition is straightforwardly met in the case where the 
highest singular value of $T_F(\alpha, \omega)$ is no more than $1$ since 
$\lambda_F^{(k)} \leq \norm{T_F(\alpha, \omega)}$. However, both in inexact and
exact arithmetic, GADI may converge even if $\norm{T_F(\alpha, \omega)} \geq
1$, so that the bound $\lambda_F^{(k)} \leq \norm{T_F(\alpha, \omega)}$ does
not fully describe the convergence. Instead, to better capture the convergence 
properties of GADI, we need to consider the asymptotic behavior of 
$\lambda_F^{(k)}$.

\begin{remark}
\label{rem:lambda_F_asymptotic}
In exact arithmetic, the error evolves as $x - x^{(k+1)} = 
T_F(\alpha,\omega)(x - x^{(k)})$, and its norm is reduced by a factor 
$\lambda_F^{(k)} = \|T_F(\alpha, \omega)(x - x^{(k)})\| / \|x - x^{(k)}\|$ at 
each iteration. As the iterations proceed, the error vectors will align with
the dominant invariant subspace of $T_F(\alpha,\omega)$ under mild conditions 
(e.g., the initial error $x-x^{(0)}$ should have a non-zero component in the dominant 
eigenvector of $T_F(\alpha, \omega)$). This means that 
$T_F(\alpha, \omega) (x - x^{(k)}) \approx \rho(T_F(\alpha, \omega)) (x -
x^{(k)})$ as $k$ increases; see~\cite[Section~4.2.1]{saad2003iterative}. 
Thus, $\lambda_F^{(k)}$ asymptotically converges to the spectral radius
$\rho(T_F(\alpha,\omega))$, which we know is lower than $1$ by 
Lemma~\ref{lem:gadi_convergence}, and which guarantees the convergence of GADI.

In the presence of rounding errors, additional difficulties arise for studying
the asymptotics of $\lambda_F^{(k)}$. We conjecture that, in its asymptotic regime,
$\lambda_F^{(k)}$ is bounded by
\begin{align}
        \lambda_F^{(k)} & \lesssim \rho\big(T_F(\alpha, \omega)\big) + c(n)c_F \min\Big\{
        \kappa(HS)\kappa(H), \kappa(A)\big(\kappa(H) + \kappa(S)\big),
        \kappa(H)\kappa(S) \Big\}u_s \nonumber\\
         & \quad + c(n)c_F \mu^{(k)}\kappa(A) u_s + c(n)c_F\kappa(A)u_r +
         c(n)c_F u \label{eq:943802}\\
        & \quad + c(n) \Big(u + c_F \kappa(A) u_r \Big) /\norm{x -
        \widehat{x}^{(k)}}, \nonumber
\end{align}
assuming that the number of GADI iterations $k$ is reasonable.  
Hence, if the perturbations induced by the rounding errors are negligible against
$\rho(T_F(\alpha, \omega))$, $\lambda_F^{(k)}$ converges
approximately toward $\rho(T_F(\alpha, \omega))$ as we iterate.
While we recognize that proving this statement is not trivial, we choose not to
attempt a proof in this article since this result is not critical for our 
study, and a rigorous development cannot be made concisely. We think, however, 
that a proof is possible by adapting derivations 
in~\cite[sect.~17.2]{10.5555/579525} to express $x - \widehat{x}^{(k)}$ in 
terms of
$T_F^k(\alpha, \omega)(x - x^{(0)})$, which should then be used to bound
$\lambda_F^{(k)} = \|T_F(\alpha, \omega)(x - \widehat{x}^{(k)})\| / \|x -
\widehat{x}^{(k)}\|$ as in~\eqref{eq:943802}.
\end{remark}

Thus, under Remark~\ref{rem:lambda_F_asymptotic}, the convergence rate of the
forward error is likely to behave as
\begin{equation}\label{eq:234790}
    \begin{split}
        \beta_F & = \rho\big(T_F(\alpha, \omega)\big) + c(n)c_F \min\Big\{
        \kappa(HS)\kappa(H), \kappa(A)\big(\kappa(H) + \kappa(S)\big),
        \kappa(H)\kappa(S) \Big\}u_s \\
    & \quad + c(n)c_F \mu^{(k)}\kappa(A) u_s + c(n)c_F\kappa(A)u_r + c(n)c_F u,
    \end{split}
    \end{equation}
after a certain number of iterations. We observed throughout our experiments
that it often just takes a few first iterations for the convergence rate 
$\beta_F$ to reach its asymptotic regime~\eqref{eq:234790}.

\subsection{Convergence of the backward error}

We now study the convergence of the backward error~\eqref{eq:231477} of mixed
precision GADI. Similarly to Theorem~\ref{thm2} on the forward error, we 
quantify the convergence rate (noted $\beta_B$) and limiting accuracy (noted 
$\zeta_B$) of the backward error in the following theorem. 
\begin{theorem}
\label{thm3}
Under the hypotheses of Theorem~\ref{thm2}, the computed iterate
$\widehat{x}^{(k+1)}$ satisfies for all $k\geq 0$
\begin{equation}
    \|b-A\widehat{x}^{(k+1)}\| 
    \lesssim \beta_{B} \|b-A\widehat{x}^{(k)}\|   
    + \zeta_B\Big( \|A\| \|\widehat{x}^{(k)}\|+\|b\| \Big),
    \label{eq:be_recur_local}
\end{equation}
with
\begin{align*}
    \beta_{B} & =  \lambda^{(k)}_{B} + c(n) c_B 
    \min\Big\{\kappa(HS)\,\kappa(S), \kappa(A)\big(\kappa(H)+\kappa(S)\big),
    \kappa(H)\kappa(S)\Big\} u_s \\
    & \quad + c(n) c_B\kappa(A) u ,\\
    \zeta_B & = c(n) \Big( u + c_B u_r \Big),
\end{align*}
where $\lambda^{(k)}_{B} = \|T_B (b-A\widehat x^{(k)})\| / \|b-A\widehat
    x^{(k)}\|$, $T_{B}(\alpha, \omega)$ is the similarity transform of the
    iteration matrix defined in section~\ref{subsec:gadi_framework}, 
    and $c_B = \|I-T_B(\alpha,\omega)\|$.
\end{theorem}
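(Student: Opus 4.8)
The plan is to mirror the structure of the proof of Theorem~\ref{thm2}, but to work with the residual $b-A\widehat{x}^{(k+1)}$ instead of the forward error. First I would multiply the fundamental error recurrence~\eqref{eq:ineq1} on the left by $A$, so that every occurrence of $x-\widehat{x}^{(j)}$ becomes $A(x-\widehat{x}^{(j)}) = b-A\widehat{x}^{(j)}$, and every occurrence of $T_F(\alpha,\omega)$ conjugated by $A$ becomes $T_B(\alpha,\omega) = A T_F(\alpha,\omega)A^{-1}$, which by Lemma~\ref{lem:gadi_convergence} has the same spectral radius. This yields
\begin{align*}
    b-A\widehat{x}^{(k+1)} & = T_B(\alpha,\omega)(b-A\widehat{x}^{(k)}) - A\,\Delta\widehat{x}^{(k)} \\
    & \quad - \alpha(2-\omega) A(I+J_k^{\scriptscriptstyle S})(HS)^{-1}(I+P_k^{\scriptscriptstyle H})\Delta\widehat{r}^{(k)}\\
    & \quad - \alpha(2-\omega) A\Big((I+J_k^{\scriptscriptstyle S})(HS)^{-1}P_k^{\scriptscriptstyle H}A + J_k^{\scriptscriptstyle S}(HS)^{-1}A\Big)A^{-1}(b-A\widehat{x}^{(k)}).
\end{align*}
From here the task is to bound the three perturbation terms by the stated quantities, exactly as in Theorem~\ref{thm2} but with a factor of $A$ and $A^{-1}$ inserted where appropriate.

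For the perturbation-propagation term (the last line), I would use the identity $\alpha(2-\omega)A(HS)^{-1} = A(I-T_F)A^{-1}A = (I-T_B)A$, hence $\|\alpha(2-\omega)A(HS)^{-1}\| \le c_B$ after absorbing the $A^{-1}$ on the right. Combined with the bounds $\|J_k^{\scriptscriptstyle S}\|, \|P_k^{\scriptscriptstyle H}\| \lesssim c(n)\kappa(S)u_s$ and $c(n)\kappa(H)u_s$ from Lemma~\ref{lem1_corrected}, and the same three algebraic manipulations used for~\eqref{eq:395348}, \eqref{eq:394278}, and~\eqref{eq:233487} (one regrouping through $HS(HS)^{-1}$, one brute-force submultiplicative bound, one regrouping $H^{-1}P_k^{\scriptscriptstyle H}H = J_k^{\scriptscriptstyle H}$), I get the $\min\{\kappa(HS)\kappa(S),\,\kappa(A)(\kappa(H)+\kappa(S)),\,\kappa(H)\kappa(S)\}u_s$ factor multiplying $\|b-A\widehat{x}^{(k)}\|$ — note the asymmetry $\kappa(HS)\kappa(S)$ rather than $\kappa(HS)\kappa(H)$ comes from which of the two inner perturbation matrices gets conjugated away on the residual side versus the solution side. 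For the $\Delta\widehat{x}^{(k)}$ term, I would use $\|A\Delta\widehat{x}^{(k)}\| \le \|A\|\,\|\Delta\widehat{x}^{(k)}\| \lesssim u\|A\|(\|x-\widehat{x}^{(k)}\| + \|x\| + \|\widehat{y}^{(k)}\|)$ and convert $\|A\|\|x-\widehat{x}^{(k)}\| = \|b-A\widehat{x}^{(k)}\|/\mu^{(k)}$ via~\eqref{eq:947298}; since $\mu^{(k)} \ge \kappa(A)^{-1}$ this contributes $c(n)c_B\kappa(A)u\|b-A\widehat{x}^{(k)}\|$, matching the $c(n)c_B\kappa(A)u$ term in $\beta_B$, while $\|A\|\|x\| = \|b\| \le \|A\|\|\widehat{x}^{(k)}\| + \|b\|$ and $\|A\|\|\widehat{y}^{(k)}\| \lesssim c(n)c_F\|A\|\|x-\widehat{x}^{(k)}\|$ feed into $\zeta_B$. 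For the $\Delta\widehat{r}^{(k)}$ term, I would use $\|\alpha(2-\omega)A(I+J_k^{\scriptscriptstyle S})(HS)^{-1}(I+P_k^{\scriptscriptstyle H})\| \lesssim 4c_B$ together with bound~\eqref{eq:r_k_bound}, $\|\Delta\widehat{r}^{(k)}\| \le u_s\|b-A\widehat{x}^{(k)}\| + c(n)u_r(\|A\|\|\widehat{x}^{(k)}\|+\|b\|)$, which directly supplies a $c(n)c_B u_s\|b-A\widehat{x}^{(k)}\|$ contribution (absorbed into $\beta_B$, since $u_s$ times the $\min$-factor dominates it) and the $c(n)c_B u_r(\|A\|\|\widehat{x}^{(k)}\|+\|b\|)$ contribution that gives the $c_B u_r$ term in $\zeta_B$. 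Collecting the coefficient of $\|b-A\widehat{x}^{(k)}\|$ into $\beta_B$ and the coefficient of $\|A\|\|\widehat{x}^{(k)}\|+\|b\|$ into $\zeta_B$, and reading off $\lambda_B^{(k)} = \|T_B(b-A\widehat{x}^{(k)})\|/\|b-A\widehat{x}^{(k)}\|$ from the leading term, yields~\eqref{eq:be_recur_local}.

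The main obstacle I anticipate is bookkeeping the conjugation by $A$ cleanly in the three alternative bounds on the perturbation-propagation term, so as to land on precisely $\min\{\kappa(HS)\kappa(S),\,\kappa(A)(\kappa(H)+\kappa(S)),\,\kappa(H)\kappa(S)\}$ and not an over-pessimistic version with an extra stray $\kappa(A)$: the trick is to commute $A^{-1}$ past $(HS)^{-1}A$ using $(HS)^{-1}A = (I-T_F)/(\alpha(2-\omega))$ before taking norms, so that the outer $A(\cdot)A^{-1}$ collapses to $(I-T_B)$ rather than producing a separate $\kappa(A)$ factor. A secondary subtlety is ensuring the $\Delta\widehat{x}^{(k)}$ term really does produce only $\kappa(A)u$ and not $\kappa(A)^2 u$ — this is where the sharpening quantity $\mu^{(k)}\ge\kappa(A)^{-1}$ is essential, and I would be careful to invoke~\eqref{eq:947298} rather than the crude $\|A\|\|x-\widehat{x}^{(k)}\| \le \kappa(A)\|b-A\widehat{x}^{(k)}\|$, which would cost an extra $\kappa(A)$. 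Everything else is routine application of Lemmas~\ref{lem:error_analysis} and~\ref{lem1_corrected} with second-order terms dropped under the $\lesssim$ convention.
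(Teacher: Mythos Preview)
Your approach is essentially the same as the paper's: left-multiply~\eqref{eq:ineq1} by $A$, recognise $T_B=AT_FA^{-1}$ and $A(HS)^{-1}=(I-T_B)/(\alpha(2-\omega))$, and bound the three perturbation terms in parallel with~\eqref{eq:395348}--\eqref{eq:233487}, \eqref{eq:347938}, and~\eqref{eq:432090}. Two small corrections to your bookkeeping. First, the paper does \emph{not} split $\|\widehat{x}^{(k)}\|\le\|x-\widehat{x}^{(k)}\|+\|x\|$ in the $\Delta\widehat{x}^{(k)}$ term; it keeps $u\|A\|\,\|\widehat{x}^{(k)}\|$ intact and routes it directly into $\zeta_B$, while the $\widehat{y}^{(k)}$ contribution is bounded via $\|(HS)^{-1}\|\le\|A^{-1}\|c_B/(\alpha(2-\omega))$ to give $\|A\|\,\|\widehat{y}^{(k)}\|\lesssim c(n)c_B\kappa(A)\|b-A\widehat{x}^{(k)}\|$, which supplies the $c(n)c_B\kappa(A)u$ term in $\beta_B$ (not $\zeta_B$, and with $c_B$ rather than $c_F$). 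Second, your worry about $\mu^{(k)}$ is misplaced: the paper does not invoke $\mu^{(k)}$ here at all, and in any case $\|A\|\,\|x-\widehat{x}^{(k)}\|=\|b-A\widehat{x}^{(k)}\|/\mu^{(k)}\le\kappa(A)\|b-A\widehat{x}^{(k)}\|$ already yields a single $\kappa(A)$, so there is no risk of $\kappa(A)^2$.
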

\begin{proof}~
Identically to the proof of Theorem~\ref{thm2}, we wish to bound the residual
at the $(k+1)$th iteration $\norm{b - A\widehat{x}^{(k+1)}}$ in terms of
the residual at the $k$th iteration $\norm{b - A\widehat{x}^{(k)}}$. 
Applying $A$ to the left in~\eqref{eq:ineq1}, we obtain
    \begin{align}\label{eq:234782}
    b - A\widehat{x}^{(k+1)}
    & = A T_F(\alpha,\omega)\,(x-\widehat{x}^{(k)})
         - A \Delta \widehat{x}^{(k)} - \alpha(2-\omega)
         A\,(I+J_k^{\scriptscriptstyle S})(HS)^{-1}(I+P_k^{\scriptscriptstyle
         H}) \Delta\widehat{r}^{(k)} \nonumber \\
    & \quad - \alpha(2-\omega) A\Big((I+J_k^{\scriptscriptstyle S})(HS)^{-1}P_k^{\scriptscriptstyle H}A + J_k^{\scriptscriptstyle S}(HS)^{-1}A\Big)(x-\widehat{x}^{(k)}).
\end{align}

We can rework the above expression using the similarity transform
$T_B(\alpha, \omega) = A T_F(\alpha, \omega)A^{-1}$. In particular, 
using~\eqref{eq:HS_inverse} and $b-A\widehat x^{(k)} = A(x-\widehat
x^{(k)})$, we have
    \begin{equation}\label{eq:394783}
    \begin{gathered}
    A(HS)^{-1} = \frac{I - T_B(\alpha, \omega)}{\alpha(2-\omega)}, \qquad
    \|A(HS)^{-1}\| = \frac{c_B}{\alpha(2-\omega)}, \\
    \mbox{and} \quad AT_F(\alpha,\omega)(x-\widehat x^{(k)}) = T_B(b-A\widehat x^{(k)}).
\end{gathered}
    \end{equation}

    Hence, with~\eqref{eq:394783}, noting that $\kappa(H) \leq
    \kappa(HS)\kappa(S)$, and using a very similar derivation 
to~\eqref{eq:395348},~\eqref{eq:394278}, and~\eqref{eq:233487} in the proof of
    Theorem~\ref{thm2}, we can bound
    the term $\alpha(2-\omega) A((I+J_k^{\scriptscriptstyle
    S})(HS)^{-1}P_k^{\scriptscriptstyle H}A + J_k^{\scriptscriptstyle
    S}(HS)^{-1}A)(x-\widehat{x}^{(k)})$ in~\eqref{eq:234782} with
\begin{align}
    & \norm{A\Big((I+J_k^{\scriptscriptstyle S})(HS)^{-1}P_k^{\scriptscriptstyle H}A + J_k^{\scriptscriptstyle S}(HS)^{-1}A\Big)(x-\widehat{x}^{(k)})}
     = \norm{\Big(A(I+J_k^{\scriptscriptstyle
     S})(HS)^{-1}P_k^{\scriptscriptstyle H} + A J_k^{\scriptscriptstyle
     S}(HS)^{-1}\Big)(b-A\widehat{x}^{(k)})} \nonumber \\
     & \qquad \qquad \lesssim c(n) c_B \min\Big\{\kappa(HS)\,\kappa(S),
     \kappa(A)\big(\kappa(H)+\kappa(S)\big),
     \kappa(H)\,\kappa(S)\Big\}\,u_s\,\|b-A\widehat x^{(k)}\|.
     \label{eq:230429}
\end{align}
Note a minor difference with the result of Theorem~\ref{thm2}, where we now obtain
a term $\kappa(HS) \kappa(S)$ in the ``min'' instead of $\kappa(HS) \kappa(H)$.

Using~\eqref{eq:HS_inverse}, \eqref{eq:394783}, Lemmata~\ref{lem:error_analysis} and~\ref{lem1_corrected}, the 
expression~\eqref{eq:y_k_expression} of $\widehat{y}^{(k)}$, and dropping 
second-order terms, we can bound the norms of the terms involving 
$\Delta\widehat r^{(k)}$ and $\Delta\widehat x^{(k)}$ in~\eqref{eq:234782}.
We have
    \begin{equation}\label{eq:314097}
\begin{split}
    \norm{A(I+J_k^{\scriptscriptstyle S})(HS)^{-1}(I+P_k^{\scriptscriptstyle H})\Delta\widehat{r}^{(k)}} 
    & \lesssim c(n)\frac{c_B}{\alpha(2-\omega)} \Big( u_s \norm{b-A\widehat{x}^{(k)}} +
    u_r(\norm{A}\norm{\widehat{x}^{(k)}} + \norm{b})\Big),
\end{split}
\end{equation}
and
\begin{equation}\label{eq:384790}
    \norm{A\Delta \widehat{x}^{(k)}} \lesssim u \Big(\norm{A}\norm{\widehat{x}^{(k)}} +
    c(n) c_B \kappa(A) \|b - A\widehat{x}^{(k)}\|\Big).
\end{equation}

Finally, taking the norm of~\eqref{eq:234782}, collecting the 
bounds~\eqref{eq:230429}, \eqref{eq:314097}, and~\eqref{eq:384790}, and 
remarking that $\norm{AT_F(\alpha,\omega)(x-\widehat x^{(k)})} =
\lambda^{(k)}_B\norm{b-A\widehat x^{(k)}}$ with $\lambda^{(k)}_B$ defined in
the theorem statement, we
recover~\eqref{eq:be_recur_local}.
\end{proof}

Under analogous arguments to Remark~\ref{rem:lambda_F_asymptotic}, which apply
similarly to $\lambda^{(k)}_{B} = \|T_B (b-A\widehat x^{(k)})\| / \|b-A\widehat
x^{(k)}\|$, we conclude that the asymptotic regime of the backward error
convergence rate satisfies
\begin{equation}
    \begin{split}
        \beta_{B} & \approx \rho\big(T_B(\alpha, \omega)\big) + c(n) c_B 
    \min\Big\{\kappa(HS)\,\kappa(S), \kappa(A)\big(\kappa(H)+\kappa(S)\big),
    \kappa(H)\kappa(S)\Big\} u_s \\
    & \quad + c(n) c_B\kappa(A) u.
    \end{split}
\end{equation}

\subsection{Comparison to previous rounding error analyses}

\paragraph{Comparison to the mixed precision IR error analysis.}
The forms of Theorems~\ref{thm2} and~\ref{thm3} are in some ways similar to the
classic IR results~\cite[Theorems~3.2 and~4.1]{doi:10.1137/17M1140819}, 
respectively. Due to the proximity of the methods, it is natural to compare
our mixed precision GADI convergence results against those of IR. To ease our
commentary, we simplify the expressions of the convergence rates and limiting 
accuracies with the following assumptions:
\begin{itemize}
    \item The splitting matrices $H$ and $S$ are at most as ill-conditioned as 
        $A$ or $HS$ (i.e., $\max\{\kappa(S), \kappa(H)\} \leq \min\{\kappa(A),
        \kappa(HS)\}$), which we cannot guarantee under our general 
        assumptions, but which tend to be true for many practical splittings.
    \item The precision $u$ and $u_r$ are chosen high enough so that we satisfy
        $\max\{\kappa(A)u, \kappa(A)u_r\} \leq \kappa(H)\kappa(S)u_s$.
    \item The term $\mu^{(k)}$ is of order $\kappa(A)^{-1}$ so that 
        $\mu^{(k)}\kappa(A) \leq \kappa(H)\kappa(S)$. We explained in
        section~\ref{sec:notations} that we can expect $\mu^{(k)}\approx
        \kappa(A)^{-1}$ on the first iterations of GADI. It generally remains
        true up until the backward error reaches its limiting accuracy.
    \item The norms $\norm{T_F(\alpha,\omega)}$ and $\norm{T_B(\alpha,\omega)}$
        are of modest size so that $c_F$ and $c_B$ are small constants that can
        be accumulated in $c(n)$.
\end{itemize}
Under these assumptions and using $u_s \leq u \leq u_r$, the convergence rates and limiting accuracies
reduce to
\begin{equation}\label{eq:347289}
\begin{gathered}
    \beta_F = \lambda_F^{(k)} + c(n) \kappa(H)\kappa(S) u_s \quad \mbox{and} \quad \zeta_F
    = c(n) \Big(u+\kappa(A)u_r\Big), \\
    \beta_B = \lambda_{B}^{(k)} + c(n) \kappa(H) \kappa(S) u_s \quad \mbox{and}\quad \zeta_B = c(n) u.
\end{gathered}
\end{equation}

Hence, equivalently to IR, the convergence rates of mixed precision GADI are
governed by the precision $u_s$, and the limiting accuracies by the precisions
$u$ and $u_r$. Importantly, this offers the same opportunities as IR: 
\begin{itemize}
    \item We can solve the GADI subsystems inaccurately and cheaply while still
        computing a highly accurate solution. 
    \item The backward error limiting accuracy $\zeta_B$ is independent of
        $\kappa(A)$, where the forward error counterpart $\zeta_F$ is. In other
        words, if $u = u_r$ and the system is ill-conditioned, $\zeta_F$ can be
        significantly higher than $\zeta_B$.
    \item The effect of $\kappa(A)$ on the limiting accuracy of the forward error
        $\zeta_F$ can be removed by setting $u_r$ in extra precision 
        (e.g., $u_r = u^2$). 
\end{itemize}

While similar to IR to an extent, our GADI convergence results also present 
important differences worth noticing. 

First, the convergence rates $\beta_F$ 
and $\beta_B$ feature the terms $\lambda_F^{(k)}$ and $\lambda_B^{(k)}$, 
respectively. These terms, as we explained, will eventually converge to the 
spectral radii $\rho(T_{F}(\alpha, \omega)) = \rho(T_{B}(\alpha, \omega))< 1$
under mild conditions; see Remark~\ref{rem:lambda_F_asymptotic}. Importantly, the spectral radii
are (almost) 
independent from the choice of precisions $u$, $u_r$, and $u_s$, and often
range from $0.1$ up to $0.95$ in practice. Consequently, GADI offers a strong incentive to use very 
aggressive low precision $u_s$, so that $c(n) \kappa(H)\kappa(S) u_s \approx
\rho(T_{F}(\alpha, \omega)) = \rho(T_{B}(\alpha, \omega))$.
Indeed, having $c(n) \kappa(H)\kappa(S) u_s$ 
significantly lower than
$\rho(T_{F}(\alpha, \omega)) = \rho(T_{B}(\alpha, \omega))$ will not improve 
the convergence rates $\beta_F$ and $\beta_B$, which means that there are few
benefits to keeping $u_s$ in high precision.
This last assertion should, however, be slightly mitigated. As we will explain
in section~\ref{sec:pr}, the choice of parameter $\alpha$ is not fully 
independent from the precision $u_s$, and can significantly
influence the convergence rate of GADI. Additionally, while lowering the
precision $u_s$ may not affect the GADI convergence rate, if
lines~\ref{line:alg1-sys1} and~\ref{line:alg1-sys2} of Algorithm~\ref{alg1} are
solved with some iterative solvers (e.g., CG), setting $u_s$ in low precision may increase
the number of inner iterations required by these solvers.

Second, the ``rounding errors induced parts'' of the convergence rates 
$\beta_F$ and $\beta_B$ are controlled by $c(n) \kappa(H)\kappa(S) u_s$, where a
classic IR using a backward stable direct solver has a convergence rate driven 
by $c(n)\kappa(A) u_s$; see~\cite[sect.~7]{doi:10.1137/17M1140819}. Therefore,
compared with IR, the convergence rate of GADI is not directly determined by
the condition number of $A$, but rather by the condition numbers of the
splitting matrices $H$ and $S$. We cannot definitively state which is better between
$c(n) \kappa(H)\kappa(S) u_s$ and $c(n)\kappa(A) u_s$ in the
general case without more assumptions on the splitting used. In theory, both
scenarios are possible: $\kappa(A) \leq \kappa(H)\kappa(S)$
and $\kappa(H)\kappa(S) \leq \kappa(A)$. However, this result showcases the
importance of a good choice of splitting when leveraging low precision: one
should try to construct a splitting that reduces or limits $\kappa(H)$ and
$\kappa(S)$.

\paragraph{Previous rounding error analyses of ADI methods.}

Previous rounding error analyses of ADI methods have
been proposed by Rachford~\cite{doi:10.1137/0705032} and 
Zawilski~\cite{Zawilski1992Optimal,Zawilski01011990}. 
Considering the fixed precision setup $u_s = u = u_r$, our analysis differs from
these previous results in three main ways.
First, unlike these previous studies that assume commuting and symmetric 
positive definite splittings~\cite{Zawilski1992Optimal,Zawilski01011990} or 
focus on specific PDE time-stepping schemes with particular matrix 
structures~\cite{doi:10.1137/0705032}, our results apply to 
general non-commuting and non-symmetric splittings.
Second, Rachford~\cite{doi:10.1137/0705032} analyses the accumulation 
of rounding errors in time-stepping procedures, and 
Zawilski~\cite{Zawilski1992Optimal,Zawilski01011990} studies the 
limiting accuracy of cyclic ADI iterations; that is, where parameters vary cyclically.
Neither addresses how rounding errors affect the convergence rate 
of the iterations, which we do in this work for stationary ADI iteration where 
parameters are fixed.
Third, regarding stability, prior analyses bound the forward error limiting
accuracy
by $\mathcal{O}(\kappa(A) u)$~\cite{Zawilski1992Optimal,Zawilski01011990}. While
they also provide bounds for the residual, these depend on $\kappa(A)$. In contrast, our
analysis proves that the backward error limiting accuracy $\zeta_B$ is of order $\mathcal{O}(u)$,
independent of the condition number $\kappa(A)$.

Perhaps the work by Bai and Rozlo{\v{z}}n{\'\i}k~\cite{bai2015numerical}
achieves results that are the closest to ours. Their analysis does not cover
directly ADI methods, but addresses one-step and two-step
stationary iterative methods using generic matrix splittings. In particular,
they show that stationary iteration can solve the subsystems inaccurately
while preserving a limiting backward error of order 
$\mathcal{O}(u)$ under mild conditions. GADI can be rewritten as a one-step
stationary iteration, so that the model in~\cite{bai2015numerical} could 
be used to determine limiting accuracies for the backward and forward errors of GADI. However, 
our analysis features improvements and additions that are critical to
understanding
the numerical behavior of mixed precision GADI, and which we think cannot be 
recovered from~\cite{bai2015numerical}. For instance, \cite{bai2015numerical}
does not account for the use of extra precision $u_r$ for computing the
residual at line~\ref{line:alg1-res} of Algorithm~\ref{alg1}, which enables 
GADI to achieve a better forward error limiting accuracy. Most importantly,
\cite{bai2015numerical} does not provide clear expressions for the convergence
rates $\beta_F$ and $\beta_B$. We also suspect that if we were to extrapolate
those expressions from the derivations in~\cite{bai2015numerical}, we would obtain significantly more
pessimistic convergence rates for mixed precision GADI.



\section{Parameter selection strategy}
\label{sec:pr}

The number of GADI iterations is very sensitive to the choice of the regularization
parameter $\alpha$. 
To make the mixed precision GADI framework practical and robust, we present in
this section the systematic method we employ for selecting this parameter.

\subsection{On the impact of \texorpdfstring{\(\alpha\)}{alpha} on the convergence}
\label{subsec:theory_alpha}

A good choice of the regularization parameter $\alpha$ is not straightforward.
First, it depends on the input problem. 
Second, $\alpha$ adjusts two antagonistic effects. Namely, on the one
hand, increasing $\alpha$ can improve the conditioning of \(H= \alpha I +M\)
and \(S= \alpha I + N\) since
it controls the magnitude of the diagonal shift, thereby lowering the term 
$\kappa(H)\kappa(S) u_s$ in the convergence rates~\eqref{eq:347289}.
On the other hand, $HS = \alpha^2 I + \alpha A + MN$ will eventually be 
dominated by $\alpha^2 I$ as $\alpha$ grows, leading to 
$\lim_{\alpha \rightarrow \infty} \norm{I - T_F(\alpha,\omega)} =
\lim_{\alpha \rightarrow \infty} \alpha(2-\omega)\norm{(HS)^{-1}A} = 0$
using~\eqref{eq:HS_inverse}. This means that \(T_F(\alpha,\omega)\)
approaches the identity as $\alpha$ grows and, thus, $\lim_{\alpha \rightarrow
\infty} \rho(T_F(\alpha,\omega)) = 1$, thereby increasing the term
$\lambda_F^{(k)}$ in the convergence rate~\eqref{eq:347289}. Naturally, the
same holds for $\lambda_B^{(k)}$.

\subsection{A Data-Driven Parameter Prediction}
\label{subsec:parameter_prediction}
We use the same Gaussian Process Regression (GPR) as 
in~\cite[Section~3]{doi:10.1137/21M1450197}, which we train
cheaply on problems of small sizes to predict a near-optimal
\(\alpha\) for larger instances.
This data-driven approach is particularly well-suited because the selection of an
optimal \(\alpha\) for mixed precision GADI depends nonlinearly on the problem 
size and the set of precisions $u_s$, $u$, and $u_r$.
Additionally, as we explained in the previous section~\ref{subsec:theory_alpha}, $\alpha$
adjusts different antagonistic effects for which it is difficult to find a 
trade-off empirically. We found GPR to be able to capture these complex
trade-offs and nonlinear trends efficiently, providing a
robust estimate (with uncertainty).

\subsection{A systematic selection strategy for \texorpdfstring{\(\alpha\)}{alpha}}
\label{subsec:alpha_selection}

It is important to automate the selection of a good parameter $\alpha$ for a
practical and black-box implementation of mixed precision GADI. We summarize
our procedure below:
\begin{enumerate}
    \item \textbf{Initialize \(\alpha\)} with the GPR predictor trained on a
        dataset of problems of smaller sizes; see section~\ref{subsec:parameter_prediction}.
        Optionally, $\alpha$ can be initialized from human input if there is
        prior knowledge on similar matrices and problems. 
    \item \textbf{(Optional) Validate that the theoretical convergence 
        condition is met} for this $\alpha$ by ensuring that
        \begin{equation}\label{eq:239479}
        \kappa(H)\kappa(S) u_s < \tau,
        \end{equation}
        for a chosen safety threshold $0<\tau<1$ (e.g., $\tau = 0.01$). 
        If ``cheap'' condition number estimators for $H$ and $S$
        are not available, this step can be skipped.
    \item If the condition~\eqref{eq:239479} holds, we start GADI with the current 
    $\alpha$. Otherwise, we 
    \textbf{increase \(\alpha\) to reduce the condition number of $H$ and
    $S$} until~\eqref{eq:239479} is met. If the condition~\eqref{eq:239479} 
    cannot be computed cheaply, we
    start GADI with the current $\alpha$. If we observe a stagnation or divergence of the
    backward error~\eqref{eq:231477}, we stop GADI, increase $\alpha$, and
    repeat the process.
\end{enumerate}

\section{Performance analysis on GPU}
\label{sec:ne}

In this section, we evaluate the performance of mixed precision GADI 
across various configurations of precisions and implemented on an
NVIDIA A100 GPU. We apply the algorithm to three distinct
problems: Two-Dimensional Convection-Diffusion-Reaction Equation, 
Three-Dimensional Convection-Diffusion Equation, and Complex 
Reaction-Diffusion Equation. For these three problems, we compare:
\begin{itemize}
    \item \textbf{GADI-FP64} - GADI in full double precision which is 
        Algorithm~\ref{alg1} using $u=u_r=u_s=$ FP64. The two shifted
        subsystems are solved with CG.
    \item \textbf{GADI-FP32} - Mixed precision GADI using $u=u_r=$ FP64 and CG
        in precision
        $u_s=$ FP32.
    \item \textbf{GADI-BF16} - Mixed precision GADI using $u=u_r=$ FP64 and CG
        in precision $u_s=$ BF16.
    \item \textbf{cuDSS} - The CUDA direct sparse solver in double or single 
        precision, which serves as a first baseline for our benchmark.
    \item \textbf{GMRES-FP32} - Mixed precision IR using the Generalized
        Minimal Residual method (GMRES) in single precision as in~\cite{lld22}
        (i.e., $u=u_r=$ FP64 and $u_s=$ FP32 in section~\ref{subsec:mp_ir}),
        and which serves as a second baseline.
\end{itemize}

\subsection{Implementation details and experimental setting}
\label{subsec:implement}

\paragraph{Implementations of GADI-FP64 and GADI-FP32.} For all the problems we
consider, each GADI iteration yields a shifted symmetric positive definite and
a shifted skew-symmetric subsystem to solve. Both subsystems are solved via an
in-house implementation of CG. For the skew-symmetric system we use CG on the 
normal equation. Each CG iteration uses one SpMV (two for the skew-symmetric 
system), two DOTs, and several AXPY/SCAL kernels, implemented with the cuBLAS 
and cuSPARSE libraries. The CG stopping criterion is a tolerance on the relative
residual $\|b-A\widehat{x}^{(k)}\|/\|r_0\|$. For each problem, we select the 
tolerance in the set $\{10^{-2}, 10^{-3}, 10^{-4}, 10^{-6}\}$ that leads to the
best performance. GADI-FP64 and GADI-FP32 both compute
lines~\ref{line:alg1-res} and~\ref{line:alg1-update} of Algorithm~\ref{alg1} in
FP64. GADI-FP64 uses CG in FP64, whereas GADI-FP32 leverages CG in FP32. We
use the zero vector $x_0=0$ as the initial guess.

\paragraph{Distinctive features of the GADI-BF16 implementation.} GADI-BF16 
uses the \texttt{cublas*Ex} and cuSPARSE APIs that can leverage BF16 
performance by decoupling storage type and compute type. Hence, with GADI-BF16,
the matrices and vectors are stored in BF16 but the computation is performed in
FP32. As our CG implementation is memory-bound, GADI-BF16 benefits from memory 
reduction and increased speedup due to the better BF16 bandwidth, as well as
increased stability due to the FP32 accumulation within the computing unit. We 
employ an early-stop strategy to detect and prevent stagnation.

\paragraph{GADI memory consumption.} Most of our GADI memory usage concerns 
storing the different matrix operators. Besides storing $A$, we precompute the
symmetric and skew-symmetric matrices $H$ and $S$, as well as the matrix $\alpha I - N$, and store them in memory. 
Note that this yields a significant memory increase that could be avoided by 
forming the applications of $H$, $S$ and $\alpha I - N$ to a vector by means of SpMVs with
$A$. On the other hand, by not storing these matrices explicitly, we would require
more SpMV calls and a significant runtime increase. For this reason, 
we choose to prioritize runtime at the cost of having to store $H$, $S$ and $\alpha I - N$.
With this implementation, by storing $H$, $S$ and $\alpha I - N$ in low precision $u_s$ (FP32 
or BF16), mixed precision GADI can achieve noticeable memory reduction over 
GADI-FP64.

\paragraph{Implementation of GMRES-FP32.} We implement the mixed-precision
GMRES-based IR from~\cite{lld22}, which is a form of restarted GMRES, and which
we use as a state-of-the-art sparse iterative solver baseline. The 
update of the iterate \(x^{(k+1)} = x^{(k)} + y^{(k)}\) and the residual
computation \(r^{(k)}=b-Ax^{(k)}\) are performed in FP64, while the inner GMRES iterations
compute and store in FP32. We employ Modified Gram-Schmidt for the orthogonalization and restart GMRES after 50 iterations. We
cross-checked configuration and timings on a subset of problems 
from~\cite{lld22} and tuned accordingly to obtain comparable (or better)
performance.

\paragraph{cuDSS baseline.} We use NVIDIA's cuDSS as a state-of-the-art sparse 
direct solver baseline. In our experiments, we use its FP64 or FP32 version
depending on the solution target accuracy. We report
the runtime and (estimated) memory consumption as a point of reference for 
performance. Note that, to our knowledge, cuDSS does not provide 
access to the factorization memory peak. Therefore, the memory we report can be
lower than the actual memory peak usage of the solver.

\paragraph{Gaussian Process Regression (GPR) for parameter selection.} We use
the systematic $\alpha$ selection strategy of section~\ref{sec:pr}. A training 
dataset is built once by extracting features and computing the optimal $\alpha$
on representative, smaller, cheaper instances. A lightweight GPR model is then
trained to predict $\alpha$ for larger problems. This model and its feature set 
are reused across all reported runs  of a given equation: dataset generation and training are a 
one-time cost. For this reason, we do not account for it in the
performance results presented in this section. We observed the execution time 
of the full GPR pipeline (i.e., dataset generation and training) to be within 
an acceptable range.

\paragraph{Measurement methodology.} Execution time is measured with CUDA 
events. Results are achieved after a warm-up run to avoid cold-start effects. 
Device memory is obtained from before/after snapshots; due to extra temporary 
memory usage like SpMV workspace, these numbers are lower bounds of the exact
memory peak consumptions.

\paragraph{Environment.} All experiments are run on a single NVIDIA A100 80GB 
SXM GPU with the following host setup: Intel(R) Xeon(R) Platinum 8358 @ 2.60GHz
CPU, 128GB DDR4 RAM, Ubuntu 22.04.6 LTS, CUDA 12.8. We use cuBLAS and cuSPARSE 
for dense and sparse kernels and cuDSS for the FP64 direct baseline. The 
compiler is nvcc.

\paragraph{Code availability.} The source code implementing the mixed precision 
GADI variants, the GMRES-FP32 IR, and the GPR-based parameter selection is available at 
\url{https://github.com/gejifeng/code-for-gadi-mix} and was used to generate the
reported results.

\subsection{Two-Dimensional Convection-Diffusion-Reaction Equation} \label{subsec:2de}

\paragraph{Problem description.}
We consider the two-dimensional steady-state convection-diffusion-reaction equation
\begin{equation*}
    -\left(u_{x_1x_1} + u_{x_2x_2}\right)
    + 2r\,(u_{x_1} + u_{x_2})
    + 100\,u = f(x_1, x_2),
\end{equation*}
defined on the unit square $\Omega = [0,1]^2$ with homogeneous Dirichlet boundary conditions.
This equation models the balance between diffusion, convection, and reaction processes, 
and is widely encountered in transport phenomena, fluid mechanics, and heat transfer applications \cite{morton1996numerical}.

Discretizing with centered finite differences on a uniform grid with spacing $h = 1/(n_g+1)$
yields a sparse linear system $A x = b$, where $A \in \mathbb{R}^{n_g^2 \times n_g^2}$ has the Kronecker product structure
\begin{align*}
    A = I \otimes T_x + T_y \otimes I.
\end{align*}
Here $T_x, T_y \in \mathbb{R}^{n_g \times n_g}$ are one-dimensional discrete operators representing
diffusion and convection along each spatial direction, defined as
\begin{align*}
    T_x = T_y = M + 2r\,N + \frac{100}{(n_g+1)^2}I,
\end{align*}
where $M = \mathrm{Tridiag}(-1, 2, -1), N = \mathrm{Tridiag}(0.5, 0, -0.5)$.
The tridiagonal matrix $M$ corresponds to the centered discretization of the second-order derivative,
while $N$ represents the first-order convective derivative operator.
The reaction term $100u$ contributes to the diagonal shift $\frac{100}{(n_g+1)^2}I$ after scaling by $h^2$.

The parameter $r$ controls the relative convection strength and we fix it to
$r=1.0$. The parameter $n_g$ denotes the number of grid points per dimension,
leading to a problem size of $n = n_g^2$. The resulting matrix $A$ is non-symmetric. 

Mixed precision GADI is applied with the symmetric/skew-symmetric splitting
\begin{align*}
H = \alpha I + \frac{A + A^T}{2}, \quad S = \alpha I + \frac{A - A^T}{2},
\end{align*}
This splitting is a variant of the Hermitian and skew-Hermitian splitting
(HSS) method, which was originally proposed for solving non-Hermitian positive 
definite linear systems \cite{bai2003hermitian}. The HSS iteration and its 
variants are prevalent solvers and preconditioners for solving linear 
systems derived from the discretization of convection-diffusion 
equations~\cite{Bertaccini2005,doi:10.1137/080723181}.

\paragraph{Performance comparison.}
We summarize the runtimes and estimations of the memory consumption of
the different solvers for increasing problem sizes in
Table~\ref{tab:2d_convection_diffusion_performance}. The largest linear system
has a dimension $n=10^8$.
All reported runs achieve a final relative residual accuracy of at least
$\norm{b-A\widehat{x}_{k}}/\norm{r_0} \leq 10^{-10}$.

\begin{table}[t!]
    \centering
    \caption{
    \label{tab:2d_convection_diffusion_performance}
    Runtime (seconds) and (estimated) memory usage (GiB) for the two-dimensional convection--diffusion--reaction
    equation using GADI-FP64/FP32/BF16, GMRES-FP32, and cuDSS-FP64. We report problems of increasing
    grid sizes $n_g \in \{960,\,2560,\,4096,\,8192,\,10000\}$, where $n=n_g^2$ is the system
    dimension. All methods achieve a relative residual of at least $\norm{b-A\widehat{x}_{k}} / \norm{r_0}
    \leq 10^{-10}$. A missing entry denoted by ``\textemdash'' indicates either
non-convergence within the prescribed iteration budget or device memory
    exhaustion.}
    \setlength{\tabcolsep}{4.4pt}
\begin{tabular*}{0.99\linewidth}{c | rrr rr | rrr rr}
        \toprule
    \multirow{3}{*}{\shortstack{Grid\\Size\\$n_g$}} &
        \multicolumn{5}{c |}{\textbf{Runtime Performance (seconds)}} &
        \multicolumn{5}{c}{\textbf{Memory (GiB)}} \\
        \cmidrule(lr){2-6} \cmidrule(lr){7-11}
        & \multicolumn{3}{c}{\textbf{GADI}} &
        \multicolumn{1}{c}{\textbf{GMRES}} & \multicolumn{1}{c |}{\textbf{CUDSS}}
        & \multicolumn{3}{c}{\textbf{GADI}} &
        \multicolumn{1}{c}{\textbf{GMRES}} &
        \multicolumn{1}{c}{\textbf{CUDSS}} \\
        \cmidrule(lr){2-4} \cmidrule(lr){5-5} \cmidrule(lr){6-6}
        \cmidrule(lr){7-9} \cmidrule(lr){10-10} \cmidrule(lr){11-11}
        & \multicolumn{1}{c}{FP64} & \multicolumn{1}{c}{FP32} &
        \multicolumn{1}{c}{BF16} & \multicolumn{1}{c}{FP32} &
        \multicolumn{1}{c |}{FP64} &
        \multicolumn{1}{c}{FP64} & \multicolumn{1}{c}{FP32} &
        \multicolumn{1}{c}{BF16} & \multicolumn{1}{c}{FP32} &
        \multicolumn{1}{c}{FP64} \\
\midrule
    \rowcolor{gray!30}
    $960$     & 4.9  & 5.1  & 3.6  & \textbf{3.3}   & 4.3 & 0.7 & 0.6 & \textbf{0.6}    & 0.7 & 1.3 \\
    $2560$    & 44.3 & 31.5 & \textbf{19.9} & 23.7   & 35.5 & 2.3 & 1.9 & \textbf{1.6}    & 2.3 & 6.9 \\
    \rowcolor{gray!30}
    $4096$    & 121.0 & 84.0 & \textbf{58.6} & 76.9  & 97.1 & 5.3 & 4.1 & \textbf{3.4}    & 5.2 & 17.0 \\
    $8192$    & 768.1 & 444.6 & \textbf{307.2} & 547.3  & 436.1 & 19.9 & 14.9 & \textbf{12.4}   & 19.4 & 69.6 \\
    \rowcolor{gray!30}
    $10000$   & 1373.0 & 779.2 & \textbf{529.5} & 973.8 & \textemdash & 29.5 & 22.0 & \textbf{18.3}   & 28.7 & \textemdash \\
        \bottomrule
    \end{tabular*}
\end{table}

We start by noticing that, for all grid sizes considered, the GADI solvers
converge to the desired accuracy even when most of the computation is
performed in lower precision. This behavior is consistent with the theory in
section~\ref{sec:ea}: as long as $A$ and the splitting matrices $H$ and $S$ are
sufficiently well-conditioned, mixed-precision GADI converges to high accuracy. 
GMRES-FP32 also successfully solves all problem sizes, but requires more 
runtime than GADI-FP32 for our largest problems. For the largest problem size, 
cuDSS-FP64 fails due to device memory overflow; sparse 
direct solvers like cuDSS typically require substantially more memory than
sparse iterative solvers.

The results of Table~\ref{tab:2d_convection_diffusion_performance} demonstrate that mixed-precision GADI achieves significant
performance gains over GADI-FP64 for this problem. GADI-BF16 is the fastest
GADI variant across all
reported sizes, delivering up to $2.59\times$ speedup over GADI-FP64 for
$n_g=10000$, and $1.42\times$ speedup over cuDSS-FP64 for the largest problem on which
cuDSS runs (i.e., $n_g = 8192$). Compared with GADI-FP32, GADI-BF16 also offers
significant speedup. For instance, for $n_g=10000$, GADI-BF16 is $1.47\times$
faster than GADI-FP32.

The two-dimensional steady-state convection-diffusion-reaction equation is a
problem for which we know that ADI approaches are some of the most efficient
solvers. This also translates to mixed precision when comparing GADI-FP32 with
GMRES-FP32, which is another iterative solver using an equivalent distribution
of precisions. Namely, we can observe that GADI-FP32 achieves significant
speedups over GMRES-FP32 on the two largest problems in 
Table~\ref{tab:2d_convection_diffusion_performance}, suggesting better
scalability properties for this problem.

Regarding memory, GADI-BF16 and
GADI-FP32 can reduce memory usage by storing the matrices $H$, $S$, and $\alpha I - N$ in lower precision, while still having to keep $A$ and the iterates $x_k$ in FP64 to
compute lines~\ref{line:alg1-res} and~\ref{line:alg1-update} of
Algorithm~\ref{alg1} in higher precision. Hence, we observe that
GADI-BF16 has a $1.61\times$ smaller memory usage than GADI-FP64 for
$n_g=10000$, and has a 
$5.61\times$ smaller memory usage than cuDSS-FP64 for $n_g=8192$.

\subsection{Three-Dimensional Convection-Diffusion Equation}
\label{subsec:3de}

\paragraph{Problem description.} 
We consider the three-dimensional convection-diffusion equation
\begin{equation*}
    -(u_{x_1 x_1} + u_{x_2 x_2} + u_{x_3 x_3}) + (u_{x_1} + u_{x_2} + u_{x_3}) = f(x_1, x_2, x_3),
\end{equation*}
defined on the unit cube \(\Omega = [0,1]^3\) with Dirichlet boundary conditions.
This model equation represents the balance between diffusion and convection
processes, commonly arising in transport phenomena, including fluid mechanics,
heat transfer, and mass transport \cite{morton1996numerical}.

The discretization via centered finite differences on a uniform grid with spacing 
\(h=1/(n_g+1)\) yields a sparse linear system \(Ax = b\), where the coefficient
matrix \(A \in \mathbb{R}^{n_g^3 \times n_g^3}\) has the Kronecker product structure
\begin{align*}
    A = T_x \otimes I \otimes I + I \otimes T_y \otimes I + I \otimes I \otimes T_z.
\end{align*}
Here, \(T_x, T_y, T_z \in \mathbb{R}^{n_g \times n_g}\) are tridiagonal 
matrices defined as \(T_x = \mathrm{Tridiag}(t_2, t_1, t_3)\) and
\(T_y = T_z = \mathrm{Tridiag}(t_2, 0, t_3)\), with \(t_1 = 2\), 
\(t_2 = -1 - r\), \(t_3 = -1 + r\), and \(r = 1/(2n_g + 2)\). The parameter
\(n_g\) denotes the number of grid points per spatial direction, resulting in a
linear system of dimension \(n=n_g^3\). The right-hand side vector 
\(b \in \mathbb{R}^{n_g^3}\) is constructed from the exact solution 
\(x = (1, 1, \ldots, 1)^T\).

Mixed precision GADI is applied with the symmetric/skew-symmetric splitting
\begin{align*}
H = \alpha I + \frac{A + A^T}{2}, \quad S = \alpha I + \frac{A - A^T}{2}.
\end{align*}

\paragraph{Performance comparison.}
Similarly to the two-dimensional convection-diffusion-reaction equation in
section~\ref{subsec:2de}, we summarize the runtimes and 
estimations of the memory peak consumption in 
Table~\ref{tab:3d_convection_diffusion_performance}. The largest linear system
has a dimension $n=1.3\times 10^8$.
All reported runs achieve a final relative residual accuracy of 
$\norm{b-A\widehat{x}_{k}}/\norm{r_0} \leq 10^{-6}$.
Note that for GADI-BF16, we did not use GPR for parameter selection as it 
proved ineffective for this problem; instead, we exceptionally hand-picked 
$\alpha$.

\begin{table}[t!]
    \centering
    \caption{
    \label{tab:3d_convection_diffusion_performance}
    Same as Table~\ref{tab:2d_convection_diffusion_performance} but for the 
    three-dimensional convection-diffusion equation.
    We report problems for $n_g \in
    \{180,\,256,\,320,\,360,\,400,\,450,\,512\}$, where $n=n_g^3$. All methods 
    achieve a relative residual $\norm{b-A\widehat{x}_{k}} / \norm{r_0} \leq
    10^{-6}$.
    }
\begin{tabular*}{0.99\linewidth}{c | rrr rr | rrr rr}
        \toprule
    \multirow{3}{*}{\shortstack{Grid\\Size\\$n_g$}} &
        \multicolumn{5}{c |}{\textbf{Runtime Performance (seconds)}} &
        \multicolumn{5}{c}{\textbf{Memory (GiB)}} \\
        \cmidrule(lr){2-6} \cmidrule(lr){7-11}
        & \multicolumn{3}{c}{\textbf{GADI}} &
        \multicolumn{1}{c}{\textbf{GMRES}} & \multicolumn{1}{c |}{\textbf{CUDSS}}
        & \multicolumn{3}{c}{\textbf{GADI}} &
        \multicolumn{1}{c}{\textbf{GMRES}} &
        \multicolumn{1}{c}{\textbf{CUDSS}} \\
        \cmidrule(lr){2-4} \cmidrule(lr){5-5} \cmidrule(lr){6-6}
        \cmidrule(l){7-9} \cmidrule(lr){10-10} \cmidrule(lr){11-11}
        & \multicolumn{1}{c}{FP64} & \multicolumn{1}{c}{FP32} &
        \multicolumn{1}{c}{BF16} & \multicolumn{1}{c}{FP32} &
        \multicolumn{1}{c |}{FP32} &
        \multicolumn{1}{c}{FP64} & \multicolumn{1}{c}{FP32} &
        \multicolumn{1}{c}{BF16} & \multicolumn{1}{c}{FP32} &
        \multicolumn{1}{c}{FP32} \\
\midrule
        \rowcolor{gray!30} 
        $180$     & 3.9          & \textbf{2.4}  & 2.8               & 3.7           & \textemdash & 3.0               & 2.4             & \textbf{2.0}    & 2.2           & \textemdash \\
        $256$     & 12.2          & \textbf{7.3}  & 8.9               & 15.8          & \textemdash & 7.7               & 5.9             & 5.5             & \textbf{5.4}  & \textemdash \\
        \rowcolor{gray!30}                                                                                                                                 
        $320$     & 25.7         & \textbf{15.3} & 18.4              & 41.5          & \textemdash & 14.7              & 11.2            & \textbf{9.4}    & 10.1          & \textemdash \\
        $360$     & 40.2         & \textbf{23.9} & 26.4              & 58.8          & \textemdash & 20.7              & 15.7            & \textbf{13.2}   & 16.4          & \textemdash \\
        \rowcolor{gray!30}                                                                                                                                 
        $400$     & 62.7         & \textbf{36.9} & 50.3              & 107.8         & \textemdash & 28.2              & 21.4            & \textbf{18.0}   & 22.4          & \textemdash \\
        $450$     & 101.1         & \textbf{60.6} & 137.3             & 199.9         & \textemdash & 40.0              & 30.3            & \textbf{25.4}   & 31.7          & \textemdash \\
        \rowcolor{gray!30}                                                                                                                                 
        $512$     & \textemdash   & \textemdash   & 390.7    & \textbf{328.3}         & \textemdash & \textemdash       & \textemdash     & \textbf{37.3}   & 46.5          & \textemdash \\
        \bottomrule
    \end{tabular*}
\end{table}

The results of Table~\ref{tab:3d_convection_diffusion_performance} reveal an
interesting performance trend. For moderate to large problem sizes up to $n_g=450$, 
GADI-FP32 consistently achieves the fastest runtime, delivering up to a 
$1.7\times$ speedup over GADI-FP64. Additionally, GADI-FP32 is also faster
than GADI-BF16 on most problem sizes, and is therefore (almost) always the most
time efficient GADI variants. Regarding the memory consumption, 
GADI-FP32 needs $1.32\times$ less memory compared with GADI-FP64 for $n_g=450$,
whereas GADI-BF16 achieves a greater reduction of $1.57\times$ over
GADI-FP64. For the largest problem $n_g=512$, both GADI-FP32 and GADI-FP64 exceed device memory, 
while GADI-BF16 remains within the 80GiB of GPU VRAM, thereby enabling problem 
sizes inaccessible to higher precision GADI variants.

Although the previous results might appear counterintuitive to a certain 
extent, they highlight an interesting case study.
Indeed, while a single inner-CG iteration in BF16 is computationally cheaper 
than one in FP32, GADI-BF16 requires a significantly larger number of outer 
iterations to converge. For instance, at $n_g=450$, GADI-FP32 converges in 88 
outer iterations with a cumulated number of 7493 inner CG iterations, whereas 
GADI-BF16 requires 955 outer iterations and 11673 cumulated inner CG iterations.
This significant increase in iterations for GADI-BF16 cancels the speedup 
benefit of individual BF16 inner-CG iterations.
Part of this underperformance is due to the failure of GPR to
identify a good parameter $\alpha$ for GADI-BF16.
Consequently, the manually selected $\alpha$ might not be optimal, or GADI-BF16 
might inherently require more iterations even for a well-selected $\alpha$.

Compared with the other solvers (i.e., cuDSS and mixed precision GMRES-based IR),
mixed precision GADI (almost) always offers the best performance. cuDSS-FP32 
cannot solve even the smallest grid listed due to memory exhaustion. GADI-FP32
is always significantly faster than GMRES-FP32 for $n_g \le 450$.
However, at the largest
grid size $n_g=512$, GADI-FP32 does not fit in GPU VRAM, while
GMRES-FP32 remains feasible (328.3s) and is faster than GADI-BF16 (390.7s).


\subsection{Complex Reaction-Diffusion Equation}
\label{subsec:complex_rd}

We consider the two-dimensional complex reaction-diffusion equation
\begin{equation}
    -\nu \Delta w + i V(x) w = f, \quad \text{in } \Omega = [0,1]^2,
\end{equation}
subject to homogeneous Dirichlet boundary conditions. Here, $\nu > 0$ is the 
diffusion coefficient, $i = \sqrt{-1}$ is the imaginary unit, and $V(x)$ is a
real-valued random potential function. This type of equation arises in various 
applications, including quantum mechanics and wave propagation in random media~\cite{pastur1992spectra,erdos2011universality}.

Discretizing the Laplacian using standard five-point central differences on a
uniform grid with $n_g$ points in each direction leads to a complex linear 
system $(L + iV)w = f$, where $L$ represents the discrete Laplacian scaled by 
$\nu$, and $V$ is a diagonal matrix containing the potential values at grid
points. To apply the GADI method, we rewrite this complex system in equivalent
real block form:
\begin{equation}
    \begin{bmatrix}
        L & -V \\
        V & L
    \end{bmatrix}
    \begin{bmatrix}
        w_r \\
        w_i
    \end{bmatrix}
    =
    \begin{bmatrix}
        f_r \\
        f_i
    \end{bmatrix},
\end{equation}
where $w = w_r + i w_i$ and $f = f_r + i f_i$. The resulting system matrix
$A \in \mathbb{R}^{n \times n}$ with $n = 2n_g^2$ is non-symmetric.

Mixed precision GADI is applied with the symmetric/skew-symmetric splitting
\begin{align*}
H = \alpha I + \frac{A + A^T}{2}, \quad S = \alpha I + \frac{A - A^T}{2}.
\end{align*}

In our experiments, we scale the random potential such that $V(x)=s\,\widetilde V(x)$
, where
$\widetilde V(x)$ is a random function with values uniformly distributed in $[0, 1]$, and
$s$ controls the magnitude of the reaction term $iV(x)w$. We set $s=10^4$ and
adjust the diffusion coefficient $\nu$ dynamically with the grid size $n_g$ to
maintain numerical difficulty, following the scaling $\nu = 10^{-5} \times (64/n_g)^2$.

\paragraph{Performance comparison.}
Similarly to the two previous sections~\ref{subsec:2de} and~\ref{subsec:3de},
we summarize the runtimes and estimations of the memory peak consumption for 
the complex reaction-diffusion problem in 
Table~\ref{tab:complex_rd_performance}. 
The largest linear system has a dimension $n=1.3\times 10^8$. All reported runs
achieve a final relative residual accuracy of 
$\norm{b-A\widehat{x}_{k}}/\norm{r_0} \leq 10^{-6}$.

\begin{table}[t!]
\centering
    \caption{
    \label{tab:complex_rd_performance}
    Runtime and memory performance for the complex reaction-diffusion equation.
    We report results for grid sizes $n_g \in \{1024, 2048, 4096, 5120, 8192\}$, with total system dimension $2n_g^2$.
    All methods achieve a relative residual $\norm{b-A\widehat{x}_{k}} / \norm{r_0} \leq 10^{-6}$.
    }
\begin{tabular*}{0.99\linewidth}{c | rrr rr | rrr rr}
        \toprule
    \multirow{3}{*}{\shortstack{Grid\\Size\\$n_g$}} &
        \multicolumn{5}{c |}{\textbf{Runtime Performance (seconds)}} &
        \multicolumn{5}{c}{\textbf{Memory (GiB)}} \\
        \cmidrule(lr){2-6} \cmidrule(lr){7-11}
        & \multicolumn{3}{c}{\textbf{GADI}} &
        \multicolumn{1}{c}{\textbf{GMRES}} & \multicolumn{1}{c |}{\textbf{CUDSS}}
        & \multicolumn{3}{c}{\textbf{GADI}} &
        \multicolumn{1}{c}{\textbf{GMRES}} &
        \multicolumn{1}{c}{\textbf{CUDSS}} \\
        \cmidrule(lr){2-4} \cmidrule(lr){5-5} \cmidrule(lr){6-6}
        \cmidrule(lr){7-9} \cmidrule(lr){10-10} \cmidrule(lr){11-11}
        & \multicolumn{1}{c}{FP64} & \multicolumn{1}{c}{FP32} &
        \multicolumn{1}{c}{BF16} & \multicolumn{1}{c}{FP32} &
        \multicolumn{1}{c |}{FP32} &
        \multicolumn{1}{c}{FP64} & \multicolumn{1}{c}{FP32} &
        \multicolumn{1}{c}{BF16} & \multicolumn{1}{c}{FP32} &
        \multicolumn{1}{c}{FP32} \\
\midrule
    \rowcolor{gray!30}
    1024 & 6.2   & 4.0  & \textbf{2.4}   & 5.1   & 13.7   & 1.2  & 1.0  & \textbf{0.9}  & 1.1  & 2.5 \\
    2048 & 21.8  & 13.1 & \textbf{8.1}   & 17.5  & 56.9   & 3.5  & 2.8  & \textbf{2.4}  & 3.1  & 8.6 \\
    \rowcolor{gray!30}
    4096 & 79.6  & 46.5 & \textbf{27.7}  & 59.2  & 257.1  & 12.8 & 9.8  & \textbf{8.3}  & 11.2 & 35.5 \\
    5120 & 121.7 & 72.4 & \textbf{40.2}  & 90.4  & 402.9  & 19.8 & 15.1 & \textbf{12.7} & 17.2 & 54.9 \\
    \rowcolor{gray!30}
    8192 & 309.0 & 180.2 & \textbf{98.7} & 225.6 & \textemdash & 49.9 & 37.9 & \textbf{31.9} & 43.4 & \textemdash \\
\bottomrule
\end{tabular*}
\end{table}

GADI-BF16 consistently outperforms all other methods for all problem sizes in 
both speed and memory efficiency. Specifically, for the
largest problem size $n_g=8192$, GADI-BF16 computes the solution in less than 100 
seconds, therefore achieving a speedup of approximately $3.1\times$ over GADI-FP64 and
$1.82\times$ over GADI-FP32. In terms of memory, GADI-BF16 requires 
31.9~GiB for the largest problem, which is $1.56\times$ less than GADI-FP64 and
$1.19\times$ less than GADI-FP32.

The direct solver cuDSS-FP32 is significantly slower and more 
memory-intensive than mixed precision GADI and GMRES-based IR. Even at moderate 
grid sizes (e.g., $n_g=4096$), cuDSS-FP32 is nearly $10\times$ slower than GADI-BF16 and consumes 
over $4\times$ more memory. For the largest grid $n_g=8192$, cuDSS-FP32 fails to
solve the system due to memory exhaustion. When compared against mixed
precision GMRES-based IR, we also observe that the mixed precision GADI approach
offers better performance for this class of problem. Indeed, GADI-FP32
outperforms GMRES-FP32 for all problem sizes in execution time and memory
consumption.

\section{Conclusion}
\label{sec:con}


In this article, we have introduced a mixed precision scheme for the General 
Alternating-Direction Implicit (GADI) framework, designed to solve large sparse 
linear systems efficiently on modern hardware. By decoupling the precisions 
used for the computationally intensive inner subsystems ($u_s$), the solution 
update ($u$), and the residual computation ($u_r$), our approach leverages the 
speed and memory advantages of low-precision arithmetic (such as FP32 and 
BF16) while maintaining the convergence and accuracy of high-precision 
solvers.

To assess the relevance and efficiency of our approach,
we first proceeded to the rounding error analysis of mixed precision GADI. In
doing so, we 
proved that the limiting accuracies of
the forward and backward errors are prescribed by the precision $u$ and $u_r$.
These results echo those in~\cite{bai2015numerical}, with the addition that the
limiting accuracy of the forward error can be made independent of $\kappa(A)$
by setting $u_r$ in a sufficiently high precision. Most importantly, we
quantified the error reductions achieved at each iteration, and showed that
the convergence rates of the errors are governed by the spectral radius of the
GADI iteration matrix and the conditioning of 
the splitting matrices together with the low precision $u_s$.
These results provide a theoretical foundation for leveraging aggressive 
low precision for solving the inner subsystems without compromising the final 
solution accuracy.


Then,
to mitigate the sensitivity of mixed precision GADI to the choice of the regularization parameter
$\alpha$, we explained that we employ a systematic, data-driven parameter selection strategy.
Using Gaussian Process Regression (GPR) trained on inexpensive small-scale instances,
the method predicts near-optimal parameters for large-scale problems given a 
set of precisions $u$, $u_r$, and $u_s$, and thereby reduces the need for
costly manual tuning.

We finally carried out experiments on large-scale 2D/3D convection-diffusion and
reaction-diffusion models to validate the effectiveness of our approach.
We demonstrated that the mixed precision GADI variants achieved substantial 
improvements over full double precision GADI, NVIDIA cuDSS direct solver, and
GMRES-based iterative refinement. Specifically, by setting $u_s$ in low precision (Bfloat16 or FP32), 
we achieved speedups of $2.6\times$, $1.7\times$, and $3.1\times$ for the 
largest problem sizes (up to $1.3\times 10^{8}$ unknowns) of the 2D, 3D
convection-diffusion and complex 
reaction-diffusion equations, respectively. 


\section*{Acknowledgments}
The numerical experiments were performed on the High Performance Computing Platform of Xiangtan University. We gratefully acknowledge its computational resources and technical support.

\section*{Funding}

The first and third authors were supported by the National Key Research
and Development Program of China (Grant No. 2023YFB3001604). The second author
was supported by the National Natural Science Foundation of China (No. 12288201
and W2433016) and the Beijing Natural Science Foundation (No. IS25038).

\bibliographystyle{siam}
\bibliography{IMANUM-refs}

\newpage
\appendix
\end{document}